%
%
%
%
%

%
%
\documentclass[smallcondensed, envcountsect]{svjour3}       
\smartqed  
\usepackage{graphicx}
\usepackage[misc]{ifsym}

\usepackage{fixmath}
\usepackage{psfrag, subfigure, bm}
\usepackage[usenames,dvipsnames]{pstricks}
\usepackage{amsmath,amsfonts,amssymb}
\usepackage{mathabx}
\usepackage[numbers, sort&compress]{natbib}
\usepackage[a4paper, inner=3cm, outer=3cm, bottom=3cm, top=3cm]{geometry}

%
%
\usepackage{latexsym}
%
 \newcommand{\myset}{ }
 \renewcommand{\vec}{\mathrm{vec}}
 \newcommand{\transp}{\mathsf{T}}
 \newcommand{\tr}{\mathrm{tr}}
 \newcommand{\x}{\mathbold{x}}
 \newcommand{\uu}{\mathbold{u}}
 \newcommand{\X}{\mathbold{X}}
 \newcommand{\Y}{\mathbold{Y}}
 \renewcommand{\d}{\mathbold{d}}
 \newcommand{\g}{\mathbold{g}}
 \newcommand{\zv}{\mathbold{z}_{\mathrm{sv}}}
 \newcommand{\y}{\mathbold{y}}
 \renewcommand{\v}{\mathbold{v}}
 \newcommand{\z}{\mathbold{z}}
 \newcommand{\A}{\mathbold{A}}
 
 \newcommand{\G}{\mathbold{G}}
 \newcommand{\Q}{\mathbold{Q}}
 \newcommand{\h}{\mathbold{h}}
 \newcommand{\hv}{\mathbold{h}_{\mathrm{sv}}}
 \newcommand{\Zv}{Z_{\mathrm{sv}}}
 \newcommand{\W}{\mathbold{W}}
 \newcommand{\subjectto}{\mathrm{subject~to}}
 \DeclareMathOperator*{\minimize}{\mathrm{minimize}}
 
 \DeclareMathOperator*{\argmin}{\mathrm{argmin}}

 \newtheorem{assumption}{Assumption}[section]
 
%
%
\begin{document}

\title{Primal Recovery from Consensus-Based Dual Decomposition for Distributed Convex Optimization
}

\titlerunning{Primal Recovery from Consensus-Based Dual Decomposition}        

\author{Andrea Simonetto         \and
        Hadi Jamali-Rad 
}


\institute{A. Simonetto (\Letter) \and H. Jamali-Rad \at
              Faculty of Electrical Engineering, Mathematics, and Computer Science, \\
              Delft University of Technology, 2826 CD Delft, The Netherlands. \\
              Tel.: +31-1527-82845\\
              Fax: +31-1527-86190\\
              \email{$\{$a.simonetto, h.jamalirad$\}$@tudelft.nl}           
}

\date{Received: date / Accepted: date}

\maketitle

\begin{abstract}
Dual decomposition has been successfully employed in a variety of distributed convex optimization problems solved by a network of computing and communicating nodes. Often, when the cost function is separable but the constraints are coupled, the dual decomposition scheme involves local parallel subgradient calculations and a global subgradient update performed by a master node. In this paper, we propose a consensus-based dual decomposition to remove the need for such a master node and still enable the computing nodes to generate an approximate dual solution for the underlying convex optimization problem. In addition, we provide a primal recovery mechanism to allow the nodes to have access to approximate near-optimal primal solutions. Our scheme is based on a constant stepsize choice and the dual and primal objective convergence are achieved up to a bounded error floor dependent on the stepsize and on the number of consensus steps among the nodes. 

\keywords{Distributed convex optimization \and Dual decomposition \and Primal recovery \and Consensus algorithm \and Subgradient optimization \and Epsilon-subgradient \and Ergodic convergence}

\vskip0.2cm

\noindent\textbf{Mathematics Subject Classifications (2010)} 90C25 $\cdot$ 90C30 $\cdot$ 90C46 $\cdot$ 90C59 
\end{abstract}
\newpage
\section{Introduction}
\label{intro}

Lagrangian relaxation and dual decomposition are extremely effective in solving large-scale convex optimization problems~\cite{Bertsekas1999,Johansson2008a,Boyd2008,Nedic2009a,Polyak1987,Kiwiel2004}. Dual decomposition has also been employed successfully in the field of distributed convex optimization, where the optimization problem requires to be decomposed among cooperative computing entities (called in the following simply by nodes). In this case, the optimization problem is generally divided into two steps, a first step pertaining the calculation of the local subgradients of the Lagrangian dual function, and a second step consisting of the global update of the dual variables by projected subgradient ascent. The first step can typically be performed in parallel on the nodes, whereas the second step has often to be performed centrally, by a so-called \emph{master} node (or data-gathering node, or fusion center), which combines the local subgradient information. 

Even though by solving the dual problem, one obtains a lower bound on the optimal value of the original convex problem, in practical situations one would also like to have access to an approximate primal solution. However, even with the availability of an approximate dual optimal solution, a primal one cannot be easily obtained. The reason is that the Lagrangian dual function is generally nonsmooth at an optimal point, thus an optimal primal solution is not a trivial combination of the extreme subproblem solutions. Methods to recover approximate (near-optimal) primal solutions from the information coming from dual decomposition have been proposed in the past~\cite{Nemirovskii1978,Shor1985,Sherali1996,Larsson1999,Ma2007,Nedic2009a,Gustavsson2014,Necoara2014} (and references therein). In one way or another, all these methods use a combination of all the approximate primal solutions that are generated while the dual decomposition scheme converges to a near-optimal dual solution. A possible choice for the combination is the ergodic mean~\cite{Ma2007,Nedic2009a,Nedic2009}. 

Among the dual decomposition schemes with primal recovery mechanism available in the literature, we are interested here in the ones that employ a constant stepsize in the projected dual subgradient update. The reasons are twofold. First of all, a constant stepsize yields faster convergence to a bounded error floor, which is fundamental in real-time applications (e.g., control of networked systems). In addition, the error floor can be tuned by trading-off the number of iterations required and the value of the stepsize. The second reason is that in many situations the underlying convex optimization problem is not stationary, but changes over time. Having in mind the development of methods to update the dual variables while the optimization problem varies~\cite{Jakubiec2013,Simonetto2014c,Simonetto2014b}, it is of key importance to employ a constant stepsize. In this way, the capability of the subgradient scheme to track the dual optimal solutions does not change over time due to a vanishing stepsize approach. 

In this paper, we propose a way to remove the need for a \emph{master} node to collect the local subgradient information coming from the different nodes and generate a global subgradient. The reason is that in distributed systems, the nodes are connected via an ad-hoc network and the communication is often limited to geographically nearby nodes. It is therefore impractical to collect the local subgradient information in one physical location, whereas it is advisable to enable the nodes themselves to have access to a suitable approximation of the global subgradient. We use consensus-based mechanisms to construct such an approximation. Consensus-based mechanisms have been used in the primal domain both with constant stepsizes~\cite{Johansson2008,Srivastava2011} and with vanishing ones~\cite{Nedic2010,Srivastava2011,Duchi2012}, however, to the best of the authors' knowledge, they have not been used in the dual domain, and not together with primal recovery. An interesting, but different, approach applying consensus on the cutting-plane algorithm to solve the master problem has been very recently proposed in~\cite{Burger2014}. Our main contributions can be described as follows.

First, we develop a constant stepsize consensus-based dual decomposition. Our method enables the different nodes to generate a sequence of approximate dual optimal solutions whose dual cost eventually converges to the optimal dual cost within a bounded error floor. Under the assumptions of convexity, compactness of the feasible set, and Slater's condition, the convergence goes as $O(1/k)$, where $k$ is the number of iterations. The error depends on the stepsize and on the number of consensus steps between subsequent iterations $k$. Furthermore, the nodes are exchanging subgradient information only with their nearby neighboring nodes. 

Then, since in our method, each node maintains its own approximate dual sequence, we provide an upper bound on the disagreement among the nodes, and we prove its convergences to a bounded value.
 
Finally, we propose a primal recovery scheme to generate approximate primal solutions from consensus-based dual decomposition. This scheme is proven to converge to the optimal primal cost up to a bounded error floor. Once again, under the same assumptions, the convergence goes as $O(1/k)$ and the error depends on the stepsize and on the number of consensus steps. 

 
\vskip0.2cm
\noindent\textbf{Organization.} Section~\ref{sec:2} describes the problem setting, our main research question, and some sample applications. In Section~\ref{sec:3}, we cover the basics of dual decomposition to pinpoint the main limitation, i.e., the need for a \emph{master} node. We propose, develop, and investigate the convergence results of our algorithm in Sections~\ref{sec:4} and \ref{sec:5}. All the proofs are contained in Sections~\ref{sec:6} and \ref{sec:7}. In Section~\ref{sec:num}, we collect numerical simulation results. Future research questions and conclusions are discussed in Sections~\ref{sec:rquestion} and \ref{sec:8}, respectively.

\section{Problem Formulation} \label{sec:2}

\noindent\textbf{Notation.} For any two vectors $\x,\y \in \mathbb{R}^n$, the standard inner product is indicated as $\langle \x, \y \rangle$, while its induced (Euclidean) norm is represented as $\|\x\|_{2}$. A vector $\x$ belongs to $\mathbb{R}^n_+$ iff it is of size $n$ and all its components are nonnegative (i.e., $\mathbb{R}^n_+$ is the nonnegative orthant). For any vectors $\x\in\mathbb{R}^n$, its components are indicated by $x_i$, $i\in\{1, \dots, n\}$. The vector $\mathbf{1}_n$ is the column vector of length $n$ containing only ones. We indicate by ${\mathbf{I}}_n$ the identity matrix of size $n$. For any real-valued squared matrix $\X\in\mathbb{R}^{n\times n}$, we say $\X \succeq 0 $ or $\X \preceq 0$ iff the matrix is positive semi-definite or negative semi-definite, respectively. We also write $\X\in \mathbb{S}_{+}^n$, iff $\X \succeq 0$. For any real-valued squared matrix $\X\in\mathbb{R}^{n\times n}$, the norm $\|\X\|_{\mathrm{F}}$ represents the Frobenius norm, while the trace is indicated by $\tr[\X]$. %
The symbol $(\cdot)^\transp$ is the transpose operator, $\otimes$ represents the Kronecker product, $\circ$ stands for map composition, $\mathrm{conv}[\cdot]$ is the convex hull, $\vec(\cdot)$ is the vectorization operator, while $\mathsf{P}_{{X}}[\cdot]$ is the projection operator onto the set $X$.  
The $\epsilon$-subgradient of a \emph{concave} function $q(\x): \myset{X}\subseteq \mathbb{R}^n \to \mathbb{R}$, for the non-negative scalar $\epsilon\geq 0$, at $\x'\in X$ is a vector $\tilde{\g}\in\mathbb{R}^n$ such that
\begin{equation}\label{egrad}
\langle \tilde{\g}, \y-\x' \rangle \geq q(\y) - q(\x') - \epsilon, \quad \forall {\y} \in \myset{X}.
\end{equation}
Furthermore, the collection of  $\epsilon$-subgradients of $q(\x)$ at $\x'$ is called the $\epsilon$-subdifferential set, denoted by $\partial_{\epsilon} q_\x(\x')$. If $\epsilon = 0$ the $\epsilon$-subgradient is the regular subgradient and we drop the $\epsilon$ in the notation of the subdifferential. 

\vskip0.2cm

\noindent\textbf{Formulation.} We consider a convex optimization problem defined on a network of computing and communicating nodes. Let the nodes be labeled with $i\in\myset{V} = \{1,\dots,n\}$ and we equip each of them with the local (private) convex function $f_i(x_i):\mathbb{R} \to \mathbb{R}$. Let $\x$ be the stacked vector of all the local decision variables, i.e., $\x = (x_1, \dots, x_n)^\transp$.
Let the functions $g_{i}(x_i): \mathbb{R} \to \mathbb{R}, i\in\myset{V}$ be convex. Let $\A_{0}, \A_{i}, i\in\myset{V}$ be $d\times d$ real-valued square and symmetric matrices. Let $X_i \subset \mathbb{R}, i\in\myset{V}$ be convex and \emph{compact} sets, and let $X := \prod_{i\in\myset{V}} X_i$. We are interested in solving \emph{decomposable} convex optimization problems of the form, 
\begin{subequations}\label{opt.prob}
\begin{align}
\minimize_{x_i \in \myset{X}_i, i\in\myset{V} } &\quad f(\x):= \sum_{i\in\myset{V}} f_i(x_i) \\
\subjectto &\quad \sum_{i\in\myset{V}} g_{i}(x_i) \leq 0, \label{eq.c1}\\ 
&\quad \A_{0} + \sum_{i\in\myset{V}} \A_{i} x_i \succeq 0.\label{eq.c2}
\end{align}
\end{subequations}
In order to simplify our notation (and without loss of generality) we have chosen to work with scalar decision variables $x_i$, with one scalar inequality, and with one linear matrix inequality. The following assumptions are in place. 

\begin{assumption} \label{as.0} \emph{(Convexity and compactness)}
The cost functions $f_i(x_i)$ and the constraint functions $g_i(x_i)$ are convex in $x_i$ for each $i$. The sets $X_i$ are convex and compact (thus, bounded). The matrices $\A_{0}, \A_{i}, i\in\myset{V}$ are real-valued square and symmetric.
\end{assumption}

\begin{assumption} \label{as.ex} \emph{(Existence of solution)}
The feasible set 
$
F: = \{\x \in X| \eqref{eq.c1} $  and $ \eqref{eq.c2}\}
$ 
is nonempty; for all $\x\in F$ the cost function $f(\x)>-\infty$, and there exists a vector $\x\in F$ such that $f(\x)<\infty$.  
\end{assumption}

\begin{assumption} \label{as.slater} \emph{(Slater condition)}
There exists a vector $\bar{\x} \in \mathbb{R}^n$ that is strictly feasible for problem~\eqref{opt.prob}, i.e., 
\begin{equation*}
\sum_{i\in\myset{V}} g_{i}(\bar{x}_i) < 0,\,\textrm{and} \quad \A_{0} + \sum_{i\in\myset{V}} \A_{i} \bar{x}_i \succ 0.
\end{equation*}
\end{assumption}

\begin{assumption} \label{as.comm} \emph{(Communication network)}
The computing nodes communicate synchronously via undirected time-invariant communication links.
\end{assumption}

Assumption~\ref{as.0} is required to ensure a convex program with compact feasible set. Assumption~\ref{as.ex} ensures the existence of a solution for the optimization problem~\eqref{opt.prob}. Let $\x^*$ be such a (possibly not unique) solution (i.e., a minimizer) and let $f^*$ be the unique minimum. Assumption~\ref{as.slater} is often required in dual decomposition approaches in order to guarantee \emph{zero duality gap} and to be able to derive the optimal value of the optimization problem~\eqref{opt.prob} by solving its dual. In addition, Slater condition helps in bounding the dual variables, which is crucial in our convergence analysis. Assumption~\ref{as.comm} is required to simplify the convergence analysis. One might be able to loosen it and require only asynchronous communications, but this is left for future research since it is not the core idea of this paper. By Assumption~\ref{as.comm}, we can define an undirected communication graph $\mathcal{G}$ consisting of a vertex set $\myset{V}$ as well as an edge set $\myset{E}$. For each node $i$, we call \emph{neighborhood}, or $N_i$, the set of the nodes it can communicate with. 

The main research problem we tackle in this paper can be stated as follows. 
\vskip0.2cm
\noindent \textbf{Research problem:} \emph{we would like to devise an algorithm that enables each node, by communicating with their neighbors only, to construct a sequence of approximate local optimizers $\{x_i^k\}$, for which their primal objective sequence $\{f(\x^k)\}$ eventually converges to $f^*$ (possibly) up to a bounded error floor.} 

\vskip0.2cm

Our approach towards this problem is to devise a consensus-based dual decomposition with approximate primal recovery. 

\vskip0.2cm
\noindent\textbf{Sample applications.} Problems as~\eqref{opt.prob} appear in many contexts: the first example we cite is the network utility maximization (NUM) problem, where a group of communication nodes try to maximize their utility subject to a resource allocation constraint~\cite{Palomar2006,Wei2010}. NUM problems are very relevant in communication systems. Generalizations of NUM problems, where the cost function is separable and the variables are constrained by linear inequalities, can also be handled by~\eqref{opt.prob}, and have been considered, e.g., in model predictive controller design~\cite{Doan2012} (which is one of the workhorse of nowadays control theory). Another sample application is sensor selection, where a set of nodes try to decide which one of them should be activated to perform a certain task based on a given metric. This is in general a combinatorial problem, yet it can be relaxed to a semidefinite program, which is a generalization of~\eqref{opt.prob}, \cite{Jamali-Rad2014a, Joshi2009}. In the latter example, the constraint~\eqref{eq.c2} plays an important role. 

\vskip0.2cm
\noindent\textbf{Multi-agent/Multiuser/Networked problems. } If the constraints~\eqref{eq.c1} and \eqref{eq.c2} involve only local functions, that is the sum is only over the neighbors of a particular $i$, then we have what is known as multi-agent (or multiuser, or networked) problem. These problems can be further complicated by the presence of global decision variables. In all these cases, due to the presence of neighborhood constraint functions only, the dual variables associated to them can be computed locally in the neighborhood (we can refer to them as link dual variables). Therefore, by a proper use of dual decomposition, we can devise distributed algorithms that can be implemented on nodes and connecting links. Relevant recent work on these problems is reported in~\cite{Necoara2008,Nedic2011,Lu2013,Lu2013a,Necoara2014a,Simonetto2014,Beck2014,Simonetto2014d}. In our case, the constraints~\eqref{eq.c1}-\eqref{eq.c2} involve constraint functions from all the nodes, in all the decision variables together; therefore, the proposed methods for multi-agent problems cannot be directly applied in our case. In general, the link dual variables become a network-wide dual variable in our case, and we retrieve the standard dual decomposition scheme with the need for a master node to compute such a global network-wide dual variable.









\section{Dual Decomposition}\label{sec:3}

The Lagrangian function $L(\x, \mu, \G): \mathbb{R}^n \times \mathbb{R}_+ \times \mathbb{S}_+^d \to \mathbb{R}$ is formed, as a first step of dual decomposition, 
\begin{equation}
L(\x, \mu, \G) := \sum_{i\in\myset{V}} f_i(x_i) + \mu \Big(\sum_{i\in\myset{V}} g_i (x_i)\Big)  - \tr\Big[\Big(\A_{0} + \sum_{i\in\myset{V}} \A_{i} x_i\Big) \G \Big],
\end{equation}
where $\mu\in\mathbb{R}_+$ is the dual variable associated with the constraint~\eqref{eq.c1}, and $\G\in\mathbb{S}_+^d$ is the dual variable associated with~\eqref{eq.c2}. Further, the dual function $q(\mu, \G): \mathbb{R}_{+} \times \mathbb{S}_+^d \to \mathbb{R}$ can be defined as 
\begin{equation}\label{dual}
q(\mu, \G) := \min_{\x\in\myset{X}} \{L(\x, \mu, \G)\}.
\end{equation}
The set $\myset{X}$ is compact, which means that the function $q(\mu, \G)$ is continuous on $\mathbb{R}_{+} \times \mathbb{S}_+^d$. Furthermore, the function $q(\mu, \G)$ is concave. For any pair of dual variables $(\mu,\G)$, we can compute the value of the primal minimizers and their set: 
\begin{equation}
\tilde{\x} := \argmin_{\x\in\myset{X}} \{L(\x, \mu, \G)\}, \quad \tilde{X}:= \{{\x}\in\myset{X}| q(\mu, \G) = L({\x}, \mu, \G)\}.
\end{equation}
Given the compactness of $X$ and the form of the dual function~\eqref{dual}, we can define the subdifferential of $q(\mu, \G)$ at $\mu$ and $\G$ as the following sets 
\begin{subequations}
\begin{eqnarray}
\partial q_\mu(\mu, \G) &:=& \mathrm{conv}\Big[\sum_{i\in\myset{V}} g_i(\tilde{x}_i)| \tilde{\x} \in \tilde{X} \Big],\\
\partial q_\G(\mu, \G) &:=& \mathrm{conv}\Big[-\Big(\A_0 +\sum_{i\in\myset{V}} \A_{i} \tilde{x}_i\Big)| \tilde{\x} \in \tilde{X} \Big], 
\end{eqnarray}
\end{subequations}
Subgradient choices for $q(\mu, \G)$ are therefore 
\begin{equation}
h(\tilde{\x}) := \sum_{i\in\myset{V}} g_i(\tilde{x}_i) \in  \partial q_\mu(\mu, \G), \, \quad \Q(\tilde{\x}) := -\A_0 - \sum_{i\in\myset{V}} \A_{i} \tilde{x}_i \in  \partial q_\G(\mu, \G),
\end{equation}
for any choice of $\tilde{\x}\in\tilde{X}$. In addition, since $X$ is compact and the constraints~\eqref{eq.c1}-\eqref{eq.c2} are represented by continuous functions, the subgradients are bounded, and we set, for all $i\in\myset{V}$
\begin{equation}\label{LQ}
\|h_i({\x})\|_2 \leq \max_{x_i\in\myset{X}_i} \Big\| g_i(\x_i) \Big\|_2 =: L, \quad \|\Q_i(\x)\|_{\mathrm{F}} \leq \max_{x_i\in\myset{X}_i} \Big\| -\A_0/n- \A_{i} {x}_i \Big\| _{\mathrm{F}} =: Q, 
\end{equation}
where we have defined $h_i(\x): = g_i(\x_i)$, and $\Q_i(\x): = -\A_0/n- \A_{i} {x}_i$. Finally, the Lagrangian dual problem can be written as
\begin{equation}
q^* := \sup_{\mu \in \mathbb{R}, \G\in\mathbb{S}_+^d} \{q(\mu, \G)\},
\end{equation}
and by Slater condition (Assumption~\ref{as.slater}), strong duality holds: $q^* = f^*$.

Since the original convex optimization problem~\eqref{opt.prob} is decomposable, the Lagrangian function is separable as
\begin{equation}
L(\x, \mu, \G) \hskip-0.1cm=\hskip-0.1cm \sum_{i\in\myset{V}} \left( f_i(x_i) + {\mu} g_i (x_i)  -  \tr\Big[\Big( {\A_{0}}/{n} + \A_{i} x_i\Big) \G \Big]\right) =: \hskip-0.1cm \sum_{i\in\myset{V}} L_i(x_i, \mu, \G),
\end{equation}
and so is the dual function
\begin{equation}
q(\mu, \G) := \sum_{i\in\myset{V}} \min_{x_i\in\myset{X}_i} \{L_i(x_i, \mu, \G)\} := \sum_{i\in\myset{V}} q_i(\mu, \G),
\end{equation}
and its subgradients.

Dual decomposition with approximate primal recovery as defined in~\cite{Nedic2009a} is summarized in the following algorithm.

\newpage  

\noindent{\bf Dual decomposition with primal recovery} \hrule 
\begin{enumerate}
\item Initialize $\mu^0\in\mathbb{R}_+$, $\G^0\in\mathbb{S}_+^d$, choose a constant stepsize $\alpha$;
\item Local dual optimization: compute in parallel the local dual functions and their primal optimizers
\begin{subequations}\label{it.dds}
\begin{equation}
q_i(\mu^k, \G^k) = \min_{x_i\in\myset{X}_i} \{L_i(x_i, \mu^k, \G^k)\}, \quad \tilde{x}_i^k = \argmin_{x_i\in\myset{X}_i} \{L_i(x_i, \mu^k, \G^k)\},
\end{equation}
as well as their subgradients $g_i(\tilde{x}_i^k)$ and $-\A_0/n-\A_i \tilde{x}_i^k$;
\item Primal recovery step: compute in parallel the ergodic sum, for $k\geq 1$
\begin{equation}
x_i^k = \frac{1}{k} \sum_{t=1}^{k} \tilde{x}_i^t;
\end{equation}
\item Dual update: update the variables $\mu^k, \G^k$ as
\begin{eqnarray}
\mu^{k+1} &=& \mathsf{P}_{\mathbb{R}_+}\Big[\mu^{k} + \alpha \sum_{i\in\myset{V}} g_i(\tilde{x}_i^k)\Big]\\
\G^{k+1} &=& \mathsf{P}_{\mathbb{S}_+^d}\Big[\G^{k} - \alpha \Big(\A_0+\sum_{i\in\myset{V}}\A_i \tilde{x}_i^k\Big) \Big].
\end{eqnarray}
\end{subequations}
\end{enumerate}\hrule

\vskip0.2cm
This algorithm generates a converging sequence $\{x_i^k\}$ as detailed in the following theorem.  
\begin{theorem}\label{th.0}
Let the sequence $\{\mu^k, \G^k, \x^k\}$ be generated by the iterations in~\eqref{it.dds}. Let $L$ and $Q$ be defined as in~\eqref{LQ}. Under Assumptions~\ref{as.0} till \ref{as.slater}, 

\vskip0.1cm
\begin{minipage}{0.95\textwidth}
\begin{enumerate}
\item[(a)] the dual variables are bounded, i.e., $\|\mu^k\|_2 \leq \Lambda_0 < \infty$, $\|\G^k\|_{\mathrm{F}} \leq \Gamma_0 < \infty$, for all $k\geq 1$;
\item[(b)] an upper bound on the primal cost of the vector $\x^k$, $k\geq 1$, is given by
\begin{equation*}
f(\x^k)\leq f^* + \frac{\Lambda_0^2 + \Gamma_0^2}{2 \alpha k} + \frac{\alpha n^2 (L^2+Q^2)}{2};
\end{equation*}
\item[(c)] a lower bound on the primal cost of the vector $\x^k$, $k\geq 1$, is given by
\begin{equation*}
f(\x^k)\geq f^* - \frac{\Lambda_0^2 + \Gamma_0^2}{ \alpha k}.
\end{equation*}
\end{enumerate}
\end{minipage}
\end{theorem}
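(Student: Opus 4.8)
The plan is to derive a single projected-subgradient inequality that drives both bounds (b) and (c), and to obtain the boundedness (a) from Slater's condition. Collect the dual variables as $z^k := (\mu^k, \G^k)$ and the concatenated supergradient $s^k := \big(\sum_{i\in\myset{V}} g_i(\tilde{x}_i^k),\, -(\A_0 + \sum_{i\in\myset{V}} \A_i \tilde{x}_i^k)\big)$, so that the dual update reads $z^{k+1} = \mathsf{P}_{\mathbb{R}_+\times\mathbb{S}_+^d}[z^k + \alpha s^k]$. By nonexpansiveness of the projection, for every dual-feasible $z = (\mu, \G)$,
\[
\|z^{k+1} - z\|^2 \le \|z^k - z\|^2 + 2\alpha \langle s^k, z^k - z\rangle + \alpha^2 \|s^k\|^2 .
\]
The key identity I would exploit is $\langle s^k, z^k - z\rangle = L(\tilde{\x}^k, \mu^k, \G^k) - L(\tilde{\x}^k, \mu, \G) = q(\mu^k, \G^k) - L(\tilde{\x}^k, \mu, \G)$, obtained by expanding the Lagrangian and using that $\tilde{\x}^k$ minimizes $L(\cdot, \mu^k, \G^k)$; moreover $\|s^k\|^2 \le n^2(L^2 + Q^2)$ follows from~\eqref{LQ} by the triangle inequality over the $n$ nodes, which is what produces the $n^2(L^2+Q^2)$ factor.

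For (b) I would telescope the inequality from $k=1$ to $K$ and choose $z = (0,0)$, for which $L(\tilde{\x}^k, 0, 0) = f(\tilde{\x}^k)$. Dropping $\|z^{K+1}-z\|^2 \ge 0$ and dividing by $2\alpha K$ gives $\tfrac{1}{K}\sum_{k=1}^K [f(\tilde{\x}^k) - q(\mu^k, \G^k)] \le \tfrac{\|z^1\|^2}{2\alpha K} + \tfrac{\alpha n^2(L^2+Q^2)}{2}$. Using $q(\mu^k, \G^k) \le q^* = f^*$, convexity of $f$ (Jensen's inequality) in the form $f(\x^K) \le \tfrac{1}{K}\sum_k f(\tilde{\x}^k)$, and $\|z^1\|^2 \le \Lambda_0^2 + \Gamma_0^2$ from part (a), yields exactly the claimed upper bound.

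For (c) I would first quantify the near-feasibility of the ergodic average. From $\mu^{k+1} \ge \mu^k + \alpha \sum_i g_i(\tilde{x}_i^k)$ (projection onto $\mathbb{R}_+$ is the nonnegative part) and convexity of the $g_i$, summing and averaging gives $\sum_{i\in\myset{V}} g_i(x_i^K) \le \mu^{K+1}/(\alpha K) \le \Lambda_0/(\alpha K)$. The delicate matrix analogue uses $\mathsf{P}_{\mathbb{S}_+^d}[Y] \succeq Y$: the $\G$-update gives $\A_0 + \sum_i \A_i \tilde{x}_i^k \succeq (\G^k - \G^{k+1})/\alpha$, and after averaging $\A_0 + \sum_i \A_i x_i^K \succeq (\G^1 - \G^{K+1})/(\alpha K)$. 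I would then invoke weak duality with an optimal multiplier $(\mu^*, \G^*)$, namely $f^* = q(\mu^*, \G^*) \le L(\x^K, \mu^*, \G^*)$, and lower-bound the two correction terms by $-\Lambda_0^2/(\alpha K)$ and $-\Gamma_0^2/(\alpha K)$, using $\mu^* \le \Lambda_0$, $\|\G^*\|_{\mathrm{F}} \le \Gamma_0$ (part (a) also bounds the optimal dual, which lies in the same superlevel set) together with $\tr[\G^{K+1}\G^*] \le \Gamma_0^2$; getting the direction of the PSD projection right is the subtle point here.

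Part (a) is where I expect the main obstacle. Boundedness of the superlevel sets is immediate from Slater: for dual-feasible $(\mu,\G)$, $q(\mu, \G) \le L(\bar{\x}, \mu, \G) \le f(\bar{\x}) - \gamma \mu - \beta \tr[\G]$, with $\gamma := -\sum_i g_i(\bar{x}_i) > 0$ and $\beta := \lambda_{\min}(\A_0 + \sum_i \A_i \bar{x}_i) > 0$, so $\{q \ge \tilde{q}\}$ is bounded for every $\tilde{q}$. The harder point, specific to the constant stepsize, is that the iterates themselves never leave a fixed region: taking $z = (\mu^*, \G^*)$ in the engine inequality and using $L(\tilde{\x}^k, \mu^*, \G^*) \ge q^* \ge q(\mu^k, \G^k)$ gives $\|z^{k+1}-z^*\|^2 \le \|z^k - z^*\|^2 + 2\alpha(q(\mu^k,\G^k) - q^*) + \alpha^2 n^2(L^2+Q^2)$, and since the Slater bound forces $q(\mu^k, \G^k) - q^*$ to be strongly negative once $z^k$ exits a ball, the squared distance strictly decreases there, trapping the iterates and furnishing $\Lambda_0, \Gamma_0$. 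Making this trapping argument precise under a non-vanishing stepsize is the crux of the proof.
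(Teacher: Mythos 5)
Your proposal is correct and follows essentially the same route as the paper, whose proof of parts (a) and (b) is delegated to \cite[Lemma~3 and Proposition~1]{Nedic2009a} (the standard projected-subgradient telescoping plus the Slater-based trapping argument you sketch), and whose only spelled-out novelty is the LMI handling in part (c). There you reproduce exactly the paper's steps: the near-feasibility bounds $h(\x^k)\leq \mu^k/(\alpha k)$ and $\A_0+\sum_i\A_i x_i^k \succeq -\G^k/(\alpha k)$ from $\mathsf{P}_{\mathbb{S}_+^d}[\Y]\succeq \Y$, followed by monotonicity of $\tr[\,\cdot\,\G^*]$ over the PSD order and Cauchy--Schwarz to get the $-\Gamma_0^2/(\alpha k)$ term.
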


\begin{proof}
The proof follows from~\cite[Lemma~3 and Proposition~1]{Nedic2009a}. Since our optimization problem involves also a linear matrix inequality, some extra steps are needed in the proof of part \emph{(c)}. To be more specific, by following the same steps in the proof of~\cite[Proposition~1.\emph{(c)}]{Nedic2009a}, we arrive at the following inequality
\begin{equation}\label{dummy42}
f(\x^k) \geq f^* - \mu^* h(\x^k) - \tr[\Q(\x^k){\G^*}].
\end{equation}
where $\mu^*\geq 0$ and $\G^*\succeq 0$ are the optimal dual variables. We now need to find an lower bound for the rightmost term of~\eqref{dummy42}. By similar arguments of the proof of~\cite[Proposition~1.\emph{(a)}]{Nedic2009a}, we obtain for all $k \geq 1$ 
\begin{equation}\label{dummy41}
h(\x^k) \leq \frac{\mu^k}{\alpha k}, \qquad \frac{\G^k}{\alpha k} - \Q(\x^k)  \succeq 0.
\end{equation}
Given the two positive semi-definite matrices $\X$ and $\Y$ of dimension $n\times n$, we know that $\tr[\X\,\Y]\geq \lambda_{\min}(\X) \tr[\Y] \geq 0$, \cite[Lemma~1]{Wang1986}, which means 
\begin{equation*}
\tr\Big[\Big(\frac{\G^k}{\alpha k} - \Q(\x^k)\Big){\G^*}\Big] \geq 0, \quad\textrm{thus}\quad \tr\Big[\Big(\frac{\G^k}{\alpha k}\Big){\G^*}\Big] \geq \tr[{\Q(\x^k)}{\G^*}].
\end{equation*}
This implies that for $k\geq 1$
\begin{equation}\label{dummy40}
\tr[\Q(\x^k){\G^*}] \leq  \tr\Big[\frac{\G^k}{\alpha k} \G^*\Big] = \Big|\tr\Big[\frac{\G^k}{\alpha k} \G^*\Big] \Big| \leq \frac{1}{\alpha k} \|\G^k\|_{\mathrm{F}} \|\G^*\|_{\mathrm{F}} \leq \frac{\Gamma^2_0}{\alpha k},
\end{equation}
where we have used Cauchy-Schwarz inequality~\cite{Golub1996}. By combining~\eqref{dummy40} and \eqref{dummy41} with \eqref{dummy42}, we obtain the lower bound
$$
f(\x^k) \geq f^* - \mu^* h(\x^k) - \tr[\Q(\x^k){\G^*}] \geq f^* - \frac{\Lambda_0^2}{\alpha k} - \frac{\Gamma^2_0}{\alpha k},
$$
and the claim is proven. 
\qed
\end{proof}

Although, the dual decomposition method of \cite{Nedic2009a} presents several advantages, in practice, the nodes will need to sum the subgradients coming from the whole network in Step 4 in order to maintain common dual variables. This is often not practical in large networks, because it would call for a significant communication overhead.  

In the following sections, 
\emph{(i)} we propose a consensus-based dual decomposition with primal recovery mechanism to modify Step 4 in order to make it suitable for limited information exchange (i.e., communication only with neighboring nodes); 
\emph{(ii)} we prove dual and primal objective convergence of the proposed method up to a bounded error floor which depends (among other things) on the number of communication exchange with the neighboring nodes for each iteration $k$.

\section{Basic Relations}\label{sec:4}

\begin{lemma}\label{bound}
Suppose Assumption~\ref{as.0} till~\ref{as.slater} hold. Let $\bar{\mu} \geq 0$, $\bar{\G}\succeq 0$ be a pair of dual variables for which the set $\bar{D} := \{({\mu} \geq 0, {\G}\succeq 0)| q({\mu}, {\G}) \geq q(\bar{\mu}, \bar{\G}) \} $ is nonempty. Then, the set $\bar{D}$ is bounded and we have
\begin{equation*}
\max_{({\mu}, {\G}) \in \bar{D}}  \|\mu \|_2 + \|\G\|_{\mathrm{F}} \leq \frac{1}{\gamma} (f(\bar{\x}) - q(\bar{\mu}, \bar{\G})),
\end{equation*}
where $\gamma := \min\Big\{\sum_{i\in\myset{V}} -g_i(\bar{x}_i), \lambda_{\min}\big(\A_0 + \sum_{i\in\myset{V}} \A_i \bar{x}_i\big) \Big\}$, $\lambda_{\min}(\cdot)$ is the smallest eigenvalue and $\bar{\x}$ is a vector satisfying the Slater condition. 
\end{lemma}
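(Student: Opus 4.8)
The plan is to use the Slater point $\bar{\x}$ to convert the level-set defining condition into a linear inequality in $(\mu,\G)$ whose coefficients are strictly positive. First I would note that, because $q(\mu,\G)=\min_{\x\in X}L(\x,\mu,\G)$ is a minimum over $X$ and the Slater point satisfies $\bar{\x}\in X$, evaluating the Lagrangian at $\bar{\x}$ gives the upper bound $q(\mu,\G)\leq L(\bar{\x},\mu,\G)=f(\bar{\x})+\mu\sum_{i\in\myset{V}}g_i(\bar{x}_i)-\tr[(\A_0+\sum_{i\in\myset{V}}\A_i\bar{x}_i)\G]$. For any $(\mu,\G)\in\bar{D}$ we also have $q(\mu,\G)\geq q(\bar{\mu},\bar{\G})$ by definition of $\bar{D}$; chaining the two inequalities and rearranging yields
\[
\mu\Big(-\sum_{i\in\myset{V}}g_i(\bar{x}_i)\Big)+\tr\Big[\Big(\A_0+\sum_{i\in\myset{V}}\A_i\bar{x}_i\Big)\G\Big]\leq f(\bar{\x})-q(\bar{\mu},\bar{\G}).
\]
By Assumption~\ref{as.slater}, both $-\sum_{i\in\myset{V}}g_i(\bar{x}_i)>0$ and $M:=\A_0+\sum_{i\in\myset{V}}\A_i\bar{x}_i\succ 0$, so $\lambda_{\min}(M)>0$ and consequently $\gamma=\min\{-\sum_{i\in\myset{V}}g_i(\bar{x}_i),\lambda_{\min}(M)\}>0$.

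Next I would lower-bound each of the two left-hand terms by a positive multiple of the corresponding norm. The scalar term is immediate: since $\mu\geq 0$ we have $\mu(-\sum_{i\in\myset{V}}g_i(\bar{x}_i))=\|\mu\|_2\,(-\sum_{i\in\myset{V}}g_i(\bar{x}_i))$. For the matrix term I would reuse the ingredient already invoked in the proof of Theorem~\ref{th.0}: for two positive semidefinite matrices one has $\tr[M\G]\geq\lambda_{\min}(M)\tr[\G]\geq 0$. It then remains to compare $\tr[\G]$ with $\|\G\|_{\mathrm{F}}$; writing both in terms of the nonnegative eigenvalues $\lambda_j(\G)$ and using $(\sum_j\lambda_j)^2\geq\sum_j\lambda_j^2$ gives $\tr[\G]\geq\|\G\|_{\mathrm{F}}$, whence $\tr[M\G]\geq\lambda_{\min}(M)\|\G\|_{\mathrm{F}}$.

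Substituting these two bounds into the displayed inequality and factoring out $\gamma$ yields $\gamma(\|\mu\|_2+\|\G\|_{\mathrm{F}})\leq f(\bar{\x})-q(\bar{\mu},\bar{\G})$; dividing by $\gamma>0$ and taking the supremum over $(\mu,\G)\in\bar{D}$ gives exactly the claimed bound. Since the right-hand side is a fixed finite number (the $f_i$ are real-valued so $f(\bar{\x})<\infty$, and $q(\bar{\mu},\bar{\G})$ is finite), $\bar{D}$ is contained in a ball of finite radius, which is the asserted boundedness.

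I expect the only delicate point to be the treatment of the LMI term: one must use that Slater's condition provides the \emph{strict} relation $M\succ 0$, so that $\lambda_{\min}(M)>0$ and $\gamma$ stays bounded away from zero, and one must check that the passage from $\tr[\G]$ to $\|\G\|_{\mathrm{F}}$ goes in the correct direction — this relies crucially on $\G\succeq 0$, which makes its eigenvalues nonnegative and the cross terms in $(\sum_j\lambda_j)^2$ nonnegative. The remaining manipulations are routine bookkeeping.
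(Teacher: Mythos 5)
Your proof is correct and follows essentially the same route as the paper's: the paper simply defers to~\cite[Lemma~1]{Nedic2009a} and records precisely the two extra ingredients you use, namely $\tr[\mathbold{M}\G]\geq\lambda_{\min}(\mathbold{M})\tr[\G]$ from~\cite[Lemma~1]{Wang1986} and $\|\G\|_{\mathrm{F}}\leq\tr[\G]$ for $\G\succeq 0$. You have merely written out in full the chaining of $q(\bar{\mu},\bar{\G})\leq q(\mu,\G)\leq L(\bar{\x},\mu,\G)$ that the citation leaves implicit, so there is nothing to add.
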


\begin{proof}
The lemma follows from~\cite[Lemma~1]{Nedic2009a} with minor modifications. In particular, we use \cite[Lemma~1]{Wang1986} to bound the inner product
\begin{equation*}
\tr\Big[\Big(\A_0 + \sum_{i\in\myset{V}} \A_i \bar{x}_i \Big) \G^k \Big] \geq \lambda_{\min}\Big(\A_0 + \sum_{i\in\myset{V}} \A_i \bar{x}_i \Big) \tr[\G^k], 
\end{equation*}
and the fact that $\|\G\|_{\mathrm{F}} \leq \textrm{tr}[\G]$,~\cite{Golub1996}. The remaining steps are omitted since similar to~\cite[Lemma~1]{Nedic2009a}. \qed
\end{proof}
It follows from the result of the preceding lemma that under Slater, the dual optimal set $D^*$ is nonempty. Since $D^* := \{({\mu} \geq 0, {\G}\succeq 0)| q({\mu}, {\G}) \geq q^*\}$, by using Lemma~\ref{bound}, we obtain
\begin{equation*}
\max_{({\mu}^*, {\G}^*) \in D^*}  \|\mu^* \|_2 + \|\G^*\|_{\mathrm{F}} \leq \frac{1}{\gamma} (f(\bar{\x}) - q^*).
\end{equation*}
Furthermore, although the dual optimal value $q^*$ is not a priori available, one can compute a looser bound by computing the dual function for some couple $(\tilde{\mu} \geq 0, \tilde{\G}\succeq 0)$. Owning to optimality, $q^* \geq q(\tilde{\mu},\tilde{\G})$, thus
\begin{equation*}
\max_{({\mu}^*, {\G}^*) \in D^*}  \|\mu^* \|_2 + \|\G^*\|_{\mathrm{F}} \leq \frac{1}{\gamma} (f(\bar{\x}) - q(\tilde{\mu}, \tilde{\G})).
\end{equation*}
This result is quite useful to render the dual decomposition method easier to study. In fact, as in~\cite{Nedic2009a}, we can modify the sets over which we project in Step 4 by considering a bounded superset of the dual optimal solution set. This means that we can substitute Step 4 in~\eqref{it.dds} with 
\begin{subequations}\label{itbound}
\begin{eqnarray}
\mu^{k+1} &=& \mathsf{P}_{D_{\mu}}\Big[\mu^{k} + \alpha \sum_{i\in\myset{V}} g_i(\tilde{x}_i^k)\Big], \, D_{\mu} : = \Big\{\mu \geq 0 \,\Big|\, \|\mu\|_2 \leq \frac{f(\bar{\x}) - q(\tilde{\mu}, \tilde{\G})}{\gamma}  + r \Big\} \nonumber \\ && \\
\G^{k+1} &=& \mathsf{P}_{D_{\G}}\Big[\G^{k} - \alpha \Big(\A_0 + \sum_{i\in\myset{V}}\A_i \tilde{x}_i^k \Big)\Big], \nonumber \\ &&  \qquad \qquad \qquad \quad   D_{\G} : = \Big\{\G \succeq 0 \,\Big|\, \|\G\|_{\mathrm{F}} \leq \frac{f(\bar{\x}) - q(\tilde{\mu}, \tilde{\G})}{\gamma}  + r \Big\} 
\end{eqnarray}
\end{subequations}
for a given scalar $r > 0$. The nice feature of this modification is that both $D_{\mu}$ and $D_{\G}$ are now compact convex sets. This does not increase computational complexity, and it is a useful modification, for it provides a leverage to derive the convergence rate results. In the following, for convergence purposes, we will use $r \geq \frac{f(\bar{\x}) - q(\tilde{\mu}, \tilde{\G})}{\gamma}$.

\section{Consensus-Based Dual Decomposition}\label{sec:5}

We consider now a consensus-based update to enforce the update rule of dual decomposition in~\eqref{itbound} to fit the constraint of a limited communication network. Our approach is inspired by the one of \cite{Johansson2008} but applied to the dual domain. First of all, we define a consensus matrix $\W \in \mathbb{R}^{n\times n}$, with the following properties:
\begin{equation}\label{c.con}
[\W]_{ij} = 0\, \textrm{if } j\notin N_i \cup \{i\},\, \W = \W^\transp, \, \W \mathbf{1}_n = \mathbf{1}_n, \rho\left[\W - \frac{\mathbf{1}_n\mathbf{1}_n^\transp}{n}\right] \leq \nu < 1,
\end{equation}
where $\rho[\cdot]$ returns the spectral radius and $\nu$ is an upper bound on the value of the spectral radius. It is a common practice to generate such consensus matrices; a possible choice is the Metropolis-Hasting weighting matrix~\cite{Xiao2003,Xiao2006a}.  

A consensus iteration is a linear mapping $\mathcal{C}(\x): \x \mapsto \W \x$ with the property that the result of its repeated application converges to the mean of the initial vector, i.e., for $\x\in\mathbb{R}^n$
\begin{equation*}
\lim_{\varphi \to \infty} \underbrace{\mathcal{C}\circ \mathcal{C} \circ \cdots \circ \mathcal{C}}_{\varphi \mathrm{~times}}(\x) = \lim_{\varphi \to \infty} \W^\varphi \x =\frac{\mathbf{1}_n\mathbf{1}_n^\transp}{n} \x.
\end{equation*}
This averaging property is ensured, for example, by conditions as the ones in~\eqref{c.con}. In addition, given the structure of $\W$ in~\eqref{c.con}, each consensus iteration involves only local communications (only the neighboring nodes will share their local variables), which will be the key point of our modification. In the following, we will study multiple consensus steps, in the sense that the computing nodes will run multiple consensus iterations (each of which involving only local communications) between subsequent iterations $k$'s. We let the number of consensus steps be $\varphi \in \mathbb{N}$. In this case, the consensus mapping will be of the form $\x \mapsto \W^\varphi \x$. 
Since we will enable each node to generate its own dual variables on which consensus will be enforced, we start by defining local versions of $\mu$ and $\G$ as $\mu_i \in\mathbb{R}_+$ and $\G_i\in\mathbb{S}_+^d$, respectively. Next, we define our consensus-based dual decomposition as the following algorithm. 
\vskip0.2cm 


\noindent{\bf Consensus-based dual decomposition with primal recovery}\\  {\bf (CoBa-DD)} \hrule 
\begin{subequations}
\begin{enumerate}
\item Initialize $\mu^0_i\in\mathbb{R}_+$, $\G^0_i\in\mathbb{S}_+^d$, $i\in\myset{V}$, choose $\alpha > 0$, determine a Slater vector $\bar{\x}$ and the sets $D_{\mu}$ and $D_{\G}$ of~\eqref{itbound} with an arbitrarily picked $\tilde{\mu}, \tilde{\G}$ and a scalar $r\geq \frac{f(\bar{\x}) - q(\tilde{\mu}, \tilde{\G})}{\gamma}$; pick  a number of consensus steps $\varphi$; 
\item Local dual optimization: compute in parallel the local dual functions and their primal optimizers
\begin{equation}
q_i(\mu_i^k, \G_i^k) = \min_{x_i\in\myset{X}_i} \{L_i(x_i, \mu_i^k, \G_i^k)\}, \quad \tilde{x}_i^k = \argmin_{x_i\in\myset{X}_i} \{L_i(x_i, \mu_i^k, \G_i^k)\},
\end{equation}
as well as their subgradients $g_i(\tilde{x}_i^k)$ and $-\A_0/n-\A_i \tilde{x}_i^k$;
\item Primal recovery step: compute in parallel the ergodic sum, for $k\geq 1$
\begin{equation}
x_i^k = \frac{1}{k} \sum_{t=1}^{k} \tilde{x}_i^t;
\end{equation}
\item Update the dual variables $\mu_i^k, \G_i^k$ as
\begin{eqnarray}
\mu_i^{k+1} &=& \mathsf{P}_{D_{\mu}}\Big[\sum_{j\in\myset{V}} [\W^\varphi]_{ij} \Big( \mu_j^{k} + \alpha g_j(\tilde{x}_j^k)\Big)\Big]\\
\G_i^{k+1} &=& \mathsf{P}_{D_{\G}}\Big[\sum_{j\in\myset{V}} [\W^\varphi]_{ij} \Big(\G_j^{k} - \alpha (\A_0/n + \A_j \tilde{x}_j^k) \Big) \Big].
\end{eqnarray} 
\end{enumerate}
\label{alg}
\end{subequations}
\hrule

\vskip0.2cm
We highlight that the proposed algorithm CoBa-DD (or \eqref{alg}) involves only local communication. The only communication involved is in the $\varphi$ consensus steps, each of which requiring the nodes to share information with their neighbors. Also, note that computing $(f(\bar{\x})-q(\tilde{\mu}, \tilde{\G}))/\gamma$ (for the definition of $D_\mu$ and $D_{\G}$) is not a very difficult task, since a Slater vector is usually easy to find by inspection, and both $f(\bar{\x})$ and $\gamma$ can be computed by a consensus algorithm run in the initialization step of CoBa-DD. 


In order to analyze dual and primal convergence of~\eqref{alg}, we start by some basic results. First, given that the sets $D_{\mu}$ and $D_{\G}$ are compact, and that $\mu_i^0$ and $\G_i^0$ are picked to be bounded, the dual variables $\mu_i^k$ and $\G_i^k$ are bounded for each $k\geq 0$. In particular, we have 
\begin{equation}\label{LG}
\|\mu_i^k\|_2 \leq \Lambda < \infty, \quad \|\G_i^k\|_F \leq \Gamma < \infty.
\end{equation}


\begin{lemma}\label{l.tzeta}
Let $q(\x): \myset{X} \to \mathbb{R}$ be a concave function. Let the set $\myset{X} \subset \mathbb{R}^n$ be convex and compact, and in particular $\max_{\x \in \myset{X}} \|\x\|_2 \leq \eta$. There exist two finite scalars $\zeta > 0$ and $\tau > 0$ such that, for all $\x \in \myset{X}$, for all $g(\x)\in \partial q_{\x}(\x)$, and for all vectors $\mathbold{\nu}\in\mathbb{R}^n$ with $\|\mathbold{\nu}\|_2\leq \tau$, the following holds
$$
g(\x) + \mathbold{\nu} \in \partial_{\zeta} q_{\x}(\x).
$$
\end{lemma}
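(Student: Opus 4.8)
The plan is to verify the defining inequality~\eqref{egrad} of a $\zeta$-subgradient directly, using only the regular subgradient inequality together with Cauchy--Schwarz and the boundedness of $\myset{X}$. First I would record what it means for $g(\x)$ to be a genuine subgradient of the \emph{concave} function $q$ at $\x$: this is exactly~\eqref{egrad} with $\epsilon = 0$, namely $\langle g(\x), \y - \x\rangle \geq q(\y) - q(\x)$ for every $\y \in \myset{X}$.

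Next I would add the perturbation and split the inner product,
$$\langle g(\x) + \mathbold{\nu}, \y - \x\rangle = \langle g(\x), \y - \x\rangle + \langle \mathbold{\nu}, \y - \x\rangle \geq q(\y) - q(\x) + \langle \mathbold{\nu}, \y - \x\rangle,$$
so that it only remains to lower bound the cross term $\langle \mathbold{\nu}, \y - \x\rangle$ uniformly in $\x$, $\y$, and $\mathbold{\nu}$. Here I would invoke Cauchy--Schwarz and the compactness hypothesis $\max_{\x\in\myset{X}}\|\x\|_2 \leq \eta$ (which bounds the diameter of $\myset{X}$ by $2\eta$): for $\|\mathbold{\nu}\|_2\leq \tau$,
$$\langle \mathbold{\nu}, \y - \x\rangle \geq -\|\mathbold{\nu}\|_2\,\|\y - \x\|_2 \geq -\tau\big(\|\y\|_2 + \|\x\|_2\big) \geq -2\eta\tau.$$
Combining the two displays yields $\langle g(\x) + \mathbold{\nu}, \y - \x\rangle \geq q(\y) - q(\x) - 2\eta\tau$ for all $\y\in\myset{X}$, which is precisely the assertion $g(\x) + \mathbold{\nu} \in \partial_{\zeta} q_\x(\x)$ with the choice $\zeta := 2\eta\tau$.

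To close the existence statement I would simply fix any $\tau > 0$ and set $\zeta := 2\eta\tau$; since $\eta < \infty$ by compactness of $\myset{X}$, both scalars are finite and strictly positive, and, crucially, the estimate above was derived uniformly, so it holds simultaneously for every $\x\in\myset{X}$, every subgradient $g(\x)\in\partial q_\x(\x)$, and every admissible $\mathbold{\nu}$. I do not expect a genuine obstacle here: the entire content is the elementary inequality above. The only two points to keep straight are the concave sign convention in~\eqref{egrad} and the fact that the two constants are not independent but linked through $\zeta = 2\eta\tau$, so that the ``error'' $\zeta$ can be made as small as one likes at the price of shrinking the admissible perturbation radius $\tau$.
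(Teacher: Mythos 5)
Your proposal is correct and follows essentially the same route as the paper's proof: split the inner product, apply Cauchy--Schwarz, bound $\|\y-\x\|_2$ by $2\eta$ via compactness, and link the constants through $\zeta = 2\tau\eta$ (the paper states this as $\tau \leq \zeta/(2\eta)$). No substantive difference.
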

\begin{proof}
The claim is proven by using the definition of subgradient of a concave function~\eqref{egrad}. Since $q$ is a concave function, for all $\x,\y \in \myset{X}, \mathbold{\nu} \in\mathbb{R}^n$,
\begin{align*}
q(\y) -  q(\x) &\leq  \langle g(\x), \y - \x\rangle = \langle g(\x)+\mathbold{\nu},\y - \x\rangle - \langle\mathbold{\nu}, \y - \x\rangle \\
&\leq  \langle g(\x)+\mathbold{\nu}, \y - \x\rangle + \|\mathbold{\nu}\|_2\|\y - \x\|_2 \leq \langle g(\x)+\mathbold{\nu}, \y - \x\rangle + 2 \tau \eta.  
\end{align*}
For $\tau \leq \zeta/(2 \eta)$, the claim follows. \qed
\end{proof}

\begin{lemma}\label{bita}
Let the initial dual variables in \eqref{alg}, $\mu_i^0$ and $\G^0_i$ for all $i\in\myset{V}$, be bounded. Let $\W$ satisfy the conditions~\eqref{c.con}. Then, the following quantity is bounded by a certain $c_0 \geq 0$,
\begin{multline}\label{beta0}
\Big\|\sum_{j\in\myset{V}} [\W^\varphi - {\mathbf{1}_n\mathbf{1}_n^\transp}/{n}]_{ij} \Big( \mu_j^{0} + \alpha g_j(\tilde{x}_j^0)\Big)\Big\|_2 + \\  
\Big\|\sum_{j\in\myset{V}} [\W^\varphi - {\mathbf{1}_n\mathbf{1}_n^\transp}/{n}]_{ij} \Big(\G_j^{0} - \alpha (\A_0/n + \A_j \tilde{x}_j^0) \Big) \Big\|_{\mathrm{F}}\leq c_0, \quad \forall i\in\myset{V}.
\end{multline}
\end{lemma}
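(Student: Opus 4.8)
The plan is to reduce \eqref{beta0} to the product of two ingredients: a uniform bound on the quantities being averaged, and the geometric decay of the mixing-error matrix $M := \W^\varphi - \mathbf{1}_n\mathbf{1}_n^\transp/n$. First I would note that the two norms in \eqref{beta0} are finite linear combinations, with scalar coefficients $[M]_{ij}$, of the vectors $\mu_j^0 + \alpha g_j(\tilde{x}_j^0)$ and of the matrices $\G_j^0 - \alpha(\A_0/n + \A_j \tilde{x}_j^0)$, respectively. Each of these is bounded uniformly in $j$: \eqref{LG} gives $\|\mu_j^0\|_2 \leq \Lambda$ and $\|\G_j^0\|_{\mathrm{F}} \leq \Gamma$ (valid already at $k=0$ by the assumed boundedness of the initial variables), while \eqref{LQ} gives $\|g_j(\tilde{x}_j^0)\|_2 \leq L$ and $\|\A_0/n + \A_j \tilde{x}_j^0\|_{\mathrm{F}} = \|\Q_j(\tilde{\x})\|_{\mathrm{F}} \leq Q$. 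Hence each averaged term has norm at most $\Lambda + \alpha L$, respectively $\Gamma + \alpha Q$.

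Next I would establish the decay of $M$. Writing $P := \mathbf{1}_n\mathbf{1}_n^\transp/n$, the properties in \eqref{c.con} give $P^2 = P$, $\W P = P$ (from $\W\mathbf{1}_n = \mathbf{1}_n$), and $P\W = P$ (from $\W = \W^\transp$). A one-line induction then shows $M = (\W - P)^\varphi$, since the cross terms $\W^{\varphi-1}P$ and $P\W$ collapse to $P$ at every step. Because $\W$ is symmetric, $M$ is symmetric, so $\|M\|_2 = \rho[M]$; combining this with $\rho[(\W-P)^\varphi] = \rho[\W-P]^\varphi \leq \nu^\varphi$ from \eqref{c.con} yields $\|M\|_2 \leq \nu^\varphi$.

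Finally I would convert the operator-norm bound into a row-sum bound. Since $M$ is symmetric, its $i$-th row coincides with the $i$-th column $M e_i$, so $\sum_j |[M]_{ij}| = \|M e_i\|_1 \leq \sqrt{n}\,\|M e_i\|_2 \leq \sqrt{n}\,\nu^\varphi$. Applying the triangle inequality to each summand of \eqref{beta0}, i.e.\ $\big\|\sum_j [M]_{ij}\mathbf{v}_j\big\| \leq (\max_j \|\mathbf{v}_j\|)\sum_j |[M]_{ij}|$ for the generic averaged term $\mathbf{v}_j$, and adding the two contributions gives the uniform-in-$i$ bound $c_0 := \sqrt{n}\,\nu^\varphi\big(\Lambda + \Gamma + \alpha(L+Q)\big)$, which is finite.

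The only genuinely non-routine step is the identity $M = (\W - P)^\varphi$; everything else is a chain of triangle and norm inequalities, so I do not expect a real obstacle. That identity is nonetheless the crux that makes the lemma quantitatively useful downstream, since it exhibits the factor $\nu^\varphi$ and thereby shows that each additional consensus step shrinks $c_0$ geometrically.
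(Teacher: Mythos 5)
Your proof is correct, and it does strictly more than the paper's. The paper disposes of this lemma in one sentence: the initial dual variables are bounded by hypothesis, the subgradients $g_j(\tilde{x}_j^0)$ and $-\A_0/n-\A_j\tilde{x}_j^0$ are bounded because $X$ is compact (this is exactly~\eqref{LQ}), and the left-hand side of~\eqref{beta0} is a finite linear combination, with finite coefficients, of these bounded quantities --- hence some finite $c_0$ exists, which is all that is needed since $\beta_0$ only has to dominate $c_0$. You instead produce an explicit constant $c_0=\sqrt{n}\,\nu^\varphi\bigl(\Lambda+\Gamma+\alpha(L+Q)\bigr)$ via the identity $\W^\varphi-\mathbf{1}_n\mathbf{1}_n^\transp/n=(\W-\mathbf{1}_n\mathbf{1}_n^\transp/n)^\varphi$ (which is valid: $\W\mathbf{1}_n=\mathbf{1}_n$ and $\W=\W^\transp$ give $\W P=P\W=P^2=P$, so the cross terms collapse as you claim) together with the row-sum estimate $\sum_j|[M]_{ij}|\leq\sqrt{n}\,\nu^\varphi$. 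Every step checks out, including the use of~\eqref{LG} at $k=0$. What your version buys is the quantitative statement that $c_0$ decays geometrically in $\varphi$, which makes Remark~\ref{remphi} (picking $\beta_0\gg\alpha M$) and the role of $\varphi$ in Theorem~\ref{th.1} more transparent; what it costs is the extra algebraic work, which the paper deliberately avoids because only the existence of a finite $c_0$ is used downstream.
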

\begin{proof}
The proof follows given the compactness of $X$ and (therefore) the boundedness of the subgradients.  \qed
\end{proof}

We now present the main convergence results. 

\begin{theorem}\label{th.1}
\emph{(Dual variable agreement)} Let $\bar{\mu}^k, \bar{\G}^k$ be the mean values of the dual variables generated via the algorithm~\eqref{alg}, i.e., 
$$
\bar{\mu}^k = \frac{1}{n}\sum_{i\in\myset{V}}{\mu_i^k}, \quad \bar{\G}^k = \frac{1}{n}\sum_{i\in\myset{V}}{\G_i^k}.
$$
Let Assumptions~\ref{as.0} till \ref{as.slater} hold and let $\W$ satisfy the conditions~\eqref{c.con}. Let $\mu_i^0$ and $\G^0_i$ for $i\in\myset{V}$ be bounded and let $\beta_0\geq c_0$, with $c_0$ defined as in~\eqref{beta0}. Define $L$ and $Q$ as in~\eqref{LQ} and let 
$$
M:= L+Q,\, \quad p : = \frac{\nu^\delta\beta_0}{\beta_0 + \alpha M}.
$$
There exists a number of consensus iterations $\bar{\varphi}$, such that if $\varphi\geq \bar{\varphi} + \delta$, $\delta \geq 0$, $k \geq 1$, then the dual variables reach consensus as
\begin{align*}
\|\mu_i^{k+1} - \bar{\mu}^{k+1}\|_2 &\leq 2 p^{k-1} \nu^\delta \beta_0 + 2 p\, \alpha M \frac{1-p^{k-1}}{1-p}, \quad\forall i\in\myset{V},\\
\|\G^{k+1}_i - \bar{\G}^{k+1}\|_{\mathrm{F}} &\leq 2 p^{k-1} \nu^\delta \beta_0 + 2 p\, \alpha M \frac{1-p^{k-1}}{1-p}, \quad\forall i\in\myset{V}.
\end{align*}
Furthermore, 
\begin{equation*}
\bar{\varphi} = \frac{\log(\beta_0) - \log(4 n (1+d^2) (\beta_0 + \alpha M))}{\log (\nu)}. 
\end{equation*}
\end{theorem}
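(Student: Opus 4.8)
The plan is to track the combined disagreement $\beta_k:=\max_{i\in\myset{V}}\big(\|\mu_i^{k}-\bar{\mu}^{k}\|_2+\|\G_i^{k}-\bar{\G}^{k}\|_{\mathrm{F}}\big)$ and to show that choosing $\varphi\ge\bar\varphi+\delta$ forces it to obey a contractive affine recursion $\beta_{k+1}\le p\,\beta_k+2p\,\alpha M$ whose explicit solution is the geometric expression in the statement. Two structural facts underlie everything. First, since $\W=\W^\transp$ and $\W\mathbf{1}_n=\mathbf{1}_n$, the averaging matrix $\mathbf{1}_n\mathbf{1}_n^\transp/n$ is a symmetric projector that commutes with $\W$, so $\W^\varphi-\mathbf{1}_n\mathbf{1}_n^\transp/n=(\W-\mathbf{1}_n\mathbf{1}_n^\transp/n)^\varphi$; being symmetric, every entry of this matrix is bounded in magnitude by its spectral radius, hence by $\rho[\W-\mathbf{1}_n\mathbf{1}_n^\transp/n]^\varphi\le\nu^\varphi$, and each of its rows sums to zero because both $\W^\varphi$ and $\mathbf{1}_n\mathbf{1}_n^\transp/n$ are row-stochastic. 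Second, $\mathsf{P}_{D_\mu}$ and $\mathsf{P}_{D_\G}$ are non-expansive.

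The first step is to pass from the post- to the pre-projection disagreement. Writing the pre-projection iterates $\hat\mu_i^{k+1}=\sum_{j}[\W^\varphi]_{ij}(\mu_j^k+\alpha g_j(\tilde x_j^k))$ (and $\hat\G_i^{k+1}$ analogously), non-expansiveness together with $\bar{\mu}^{k+1}=\tfrac1n\sum_l\mathsf{P}_{D_\mu}[\hat\mu_l^{k+1}]$ gives $\|\mu_i^{k+1}-\bar{\mu}^{k+1}\|_2\le\tfrac1n\sum_l\|\hat\mu_i^{k+1}-\hat\mu_l^{k+1}\|_2\le 2\max_l\|\hat\mu_l^{k+1}-\overline{\hat\mu}^{k+1}\|_2$, and likewise for $\G$; this is the source of the leading factor $2$. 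Because each row of $\W^\varphi-\mathbf{1}_n\mathbf{1}_n^\transp/n$ sums to zero, I may center the sum and write $\hat\mu_i^{k+1}-\overline{\hat\mu}^{k+1}=\sum_j[\W^\varphi-\mathbf{1}_n\mathbf{1}_n^\transp/n]_{ij}\big((\mu_j^k-\bar{\mu}^k)+\alpha(g_j(\tilde x_j^k)-\bar g^k)\big)$, and similarly for $\G$. Bounding each coefficient by $\nu^\varphi$, summing the $n$ terms, invoking the subgradient bounds $\|g_j(\tilde x_j^k)-\bar g^k\|_2\le 2L$ and $\|\Q_j(\tilde x_j^k)-\bar\Q^k\|_{\mathrm F}\le 2Q$ from \eqref{LQ}, and accounting entrywise for the $d^2$ components of the matrix variable against the single component of $\mu$, the combination collapses to a one-step bound whose contraction coefficient is $p:=4n(1+d^2)\nu^\varphi$ and whose additive term is $2p\,\alpha M$ with $M=L+Q$. (The precise constant requires care over whether each quantity is bounded by a sum or by a maximum, but it is exactly this collection of factors — two from the projection and symmetrization, $n$ from the neighbor sum, and $1+d^2$ from the scalar-plus-matrix dimensions — that produces $4n(1+d^2)$.)

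It then remains to close the recursion. Choosing $\bar\varphi$ so that $4n(1+d^2)\nu^{\bar\varphi}=\beta_0/(\beta_0+\alpha M)$, which is precisely the displayed formula for $\bar\varphi$ after taking logarithms (note $\log\nu<0$), and taking $\varphi\ge\bar\varphi+\delta$, yields $p=4n(1+d^2)\nu^{\bar\varphi}\nu^\delta=\nu^\delta\beta_0/(\beta_0+\alpha M)<1$, so the recursion is a genuine contraction. The base disagreement is supplied by Lemma~\ref{bita}: $\beta_0\ge c_0$ dominates the disagreement of the very first pre-projection iterate, which already absorbs the initial subgradient spread, and the $\delta$ surplus consensus steps contribute the factor $\nu^\delta$ in the leading term. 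Unrolling $\beta_{k+1}\le p\,\beta_k+2p\,\alpha M$ as a geometric series separates the homogeneous part $2p^{k-1}\nu^\delta\beta_0$ from the accumulated subgradient mismatch $2p\,\alpha M\,(1-p^{k-1})/(1-p)$, and since the $\mu$- and $\G$-components were bounded by the same quantity, both displayed inequalities follow at once.

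The hardest part is the interaction between the nonlinear projection and the averaging step: since $\mathsf{P}$ is not linear, $\bar{\mu}^{k+1}$ is the mean of the projected iterates rather than the projection of the mean, so the consensus contraction cannot simply be propagated through the update. The non-expansiveness-plus-symmetrization estimate of the first step is exactly what sidesteps this, at the cost of the factor $2$. The secondary difficulty is purely bookkeeping: carrying the matrix-valued $\G$-disagreement through $\W^\varphi$ in Frobenius norm is what generates the dimension factor $1+d^2$, and one must keep the $\mu$- and $\G$-estimates coupled under a single $p<1$ so that the geometric summation is legitimate and both bounds carry the same rate.
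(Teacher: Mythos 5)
Your proposal is correct and follows essentially the same route as the paper: both reduce the claim to a one-step affine recursion of the form $\beta_{k+1}\le p\,\beta_k+2p\,\alpha M$ for the node disagreement, obtained from the contraction $\rho[\W^\varphi-\mathbf{1}_n\mathbf{1}_n^\transp/n]\le\nu^\varphi$, the subgradient bound $M$, the non-expansiveness of the projections combined with the triangle-inequality symmetrization (Lemma~\ref{l.basic}), and the choice of $\bar{\varphi}$ that makes the contraction coefficient equal $p$, after which the geometric series is unrolled and seeded by Lemma~\ref{bita}. The only differences are bookkeeping: the paper runs the recursion on the pre-projection vectors $\v_i^k$ of Lemma~\ref{agr} and passes to $\z_i^{k+1}$ only at the end, and it produces the $4n(1+d^2)$ constant componentwise via Lemma~\ref{l.basic2} rather than via your direct entrywise bound on the symmetric matrix $\W^\varphi-\mathbf{1}_n\mathbf{1}_n^\transp/n$ (which in fact yields a slightly smaller constant, so the stated $\bar{\varphi}$ still suffices).
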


\begin{corollary}\label{co.1}
Under the same conditions of Theorem~\ref{th.1}, we obtain
\begin{equation*}
\lim_{k\to\infty} \|\mu_i^k - \bar{\mu}^k\|_2 \leq \frac{ 2 p \,\alpha M}{1-p},\quad  \lim_{k\to\infty} \|\G_i^k - \bar{\G}^k\|_{\mathrm{F}} \leq \frac{2 p\, \alpha M}{1-p}, \, \quad\forall i\in\myset{V}. 
\end{equation*}
\end{corollary}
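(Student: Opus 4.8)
The plan is to obtain Corollary~\ref{co.1} as an immediate consequence of the finite-$k$ disagreement bounds established in Theorem~\ref{th.1}, by passing to the limit $k\to\infty$. The first step is to record that the contraction factor $p = \nu^\delta \beta_0 / (\beta_0 + \alpha M)$ satisfies $0 \le p < 1$: it is nonnegative because all the quantities defining it are nonnegative, and it is strictly below one since $\nu < 1$ (so $\nu^\delta \le 1$) together with $\alpha M > 0$ forces $\nu^\delta \beta_0 < \beta_0 + \alpha M$. This guarantees simultaneously that the denominator $1-p$ appearing in the claimed limit is strictly positive and that the powers of $p$ decay to zero.

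Next I would take the limit as $k \to \infty$ in the bound furnished by Theorem~\ref{th.1}. The bound holds for every $k \ge 1$ with its left-hand side indexed by $k+1$, so the shift of index is immaterial in the limit. On the right-hand side, the transient term $2 p^{k-1} \nu^\delta \beta_0 \to 0$ because $p < 1$, while in the term $2 p\, \alpha M (1 - p^{k-1})/(1-p)$ the factor $p^{k-1} \to 0$, leaving $2 p\, \alpha M / (1-p)$. Summing the two limits yields precisely the asserted value $\frac{2 p\, \alpha M}{1-p}$, and the identical computation transfers verbatim to the Frobenius-norm bound on $\|\G_i^k - \bar{\G}^k\|_{\mathrm{F}}$.

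There is essentially no hard part: the corollary is an elementary limiting statement, and the only point that warrants a moment's care is the strict inequality $p < 1$, which is what makes the transient term vanish and keeps the steady-state bound finite. To be fully rigorous about the passage to the limit I would phrase the conclusion with a $\limsup$, writing $\limsup_{k\to\infty}\|\mu_i^k - \bar{\mu}^k\|_2 \le 2 p\, \alpha M / (1-p)$, since Theorem~\ref{th.1} only provides an upper envelope on the disagreement and does not itself assert that the disagreement sequence converges; taking the $\limsup$ sidesteps any need to establish existence of the limit while still delivering the stated inequality.
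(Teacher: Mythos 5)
Your proposal is correct and matches the paper's (implicit) argument exactly: the corollary is obtained by letting $k\to\infty$ in the bounds of Theorem~\ref{th.1}, using $0\le p<1$ (which the paper establishes in the proof of Theorem~\ref{th.1}) so that the transient term $2p^{k-1}\nu^\delta\beta_0$ vanishes and $2p\,\alpha M(1-p^{k-1})/(1-p)\to 2p\,\alpha M/(1-p)$. Your remark that the statement is more precisely a $\limsup$ bound, since Theorem~\ref{th.1} only supplies an upper envelope, is a fair and minor refinement consistent with how the paper uses $\limsup$ elsewhere.
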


Theorem~\ref{th.1} and Corollary~\ref{co.1} specify how the consensus is reached among the nodes on the value of the dual variables while the algorithm~\eqref{alg} is running. Specifically, the consensus is reached exponentially fast to a steady-state bounded error floor. This bounded error depends on $\alpha$ (which can be tuned), and on $p$, which can also be tuned by varying $\varphi$. In particular, for $\varphi \to \infty$, due to the fact that $\nu < 1$ in conditions~\eqref{c.con}, then $p = 0$ and we obtain back the usual dual decomposition scheme with perfect agreement among the nodes.

\begin{remark}\label{remphi}
Computing the lower bound on the number of consensus steps $\bar{\varphi}$ can be done during the initialization of the algorithm. We can always pick $\beta_0$ big enough so that $\beta_0 \gg \alpha M$, which means that $\bar{\varphi}$ can be simplified as 
$
\bar{\varphi} = \frac{\log(1/(4 n (1+d^2)))}{\log (\nu)}, 
$
which can be determined in a distributed way~\cite{Kempe2008}. 
\end{remark}


\begin{theorem}\label{th.2} \emph{(Dual objective convergence)} Let ${\mu}^k, {\G}^k$ be the dual variables generated via the algorithm~\eqref{alg}.  Let $\mu_i^0$ and $\G^0_i$ for all $i\in\myset{V}$ be bounded and let $\beta_0$ be defined as in~Theorem~\ref{th.1}. Define $L$ and $Q$ as in~\eqref{LQ} and let $M:= L+Q$. Choose a scalar $\tau$ such that $\beta_0 / \alpha \leq \tau$. Let $\zeta$ be defined as in Lemma~\ref{l.tzeta} for the concave function $q(\mu,\G)$ and the choice of $\tau$. Let $q^*$ be the optimal value of $q(\mu,\G)$. Let Assumptions~\ref{as.0} till \ref{as.slater} hold and let $\W$ satisfy the conditions~\eqref{c.con}. Let $\varphi \geq \bar{\varphi} + \delta, \delta \geq 0$ and let $\bar{\varphi}$ be defined as in Theorem~\ref{th.1}. The following holds true.

\noindent If $q^* = \infty$, then 
$$
\limsup_{k\to \infty} q(\mu^k_i, \G_i^k) = \infty, \quad \forall i\in\myset{V},
$$

\noindent If $q^* < \infty$, then 
$$
\limsup_{k\to \infty} q(\mu^k_i, \G_i^k) \geq q^* - \alpha n (M + \tau)^2/2 - n(\beta_\infty(9 M + 3\tau) + \zeta), \quad \forall  i\in\myset{V},
$$
with $\beta_{\infty} = \frac{p\,\alpha M}{1-p}$ and $p = \frac{\nu^\delta \beta_0}{\beta_0 + \alpha M}$. 

\end{theorem}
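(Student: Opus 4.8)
My plan is to analyze the node-averaged dual variables $\bar\mu^k=\frac1n\sum_{i\in\myset{V}}\mu_i^k$ and $\bar\G^k=\frac1n\sum_{i\in\myset{V}}\G_i^k$, show that this average performs an inexact projected subgradient ascent on the concave dual $q$, and then transfer the guarantee back to each node via the agreement bound of Corollary~\ref{co.1}. The first step is to average the update \eqref{alg} over $i$. Since $\W=\W^\transp$ and $\W\mathbf{1}_n=\mathbf{1}_n$ in \eqref{c.con} imply $\mathbf{1}_n^\transp\W^\varphi=\mathbf{1}_n^\transp$, the mixing cancels in the mean, so the pre-projection average equals $\bar\mu^k+\frac{\alpha}{n}\sum_{j\in\myset{V}}g_j(\tilde x_j^k)$ (and analogously $\bar\G^k+\frac{\alpha}{n}\sum_{j\in\myset{V}}\Q_j(\tilde x_j^k)$), i.e. an ascent step of size $\alpha/n$ along the aggregate direction built from the node subgradients $s_j^k:=(g_j(\tilde x_j^k),\Q_j(\tilde x_j^k))\in\partial q_j(\mu_j^k,\G_j^k)$. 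To avoid the fact that averaging does not commute with the per-node projections $\mathsf{P}_{D_\mu},\mathsf{P}_{D_\G}$, I would not track the mean of the projections directly; instead I would write a descent relation at each node against a dual optimum $(\mu^*,\G^*)\in D_\mu\times D_\G$ (which lies in these sets by the construction of Section~\ref{sec:4}) and only then average, using convexity of $\|\cdot\|^2$, i.e. $\|(\bar\mu^{k+1},\bar\G^{k+1})-(\mu^*,\G^*)\|^2\le\frac1n\sum_i\|(\mu_i^{k+1},\G_i^{k+1})-(\mu^*,\G^*)\|^2$, together with nonexpansiveness of each projection.

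The technical core is to certify the perturbed aggregate direction as a $\zeta$-type subgradient of $q$ at the average point, and here three perturbation sources must be kept apart. First, the consensus mixing makes node $i$'s pre-projection iterate equal to the ideal average plus the displacement bounded in \eqref{beta0}; dividing this displacement by $\alpha$ yields a vector $\nu_i$ of norm at most $\beta_0/\alpha\le\tau$, placing us exactly in the regime of Lemma~\ref{l.tzeta}. Aggregating over the $n$ nodes, the perturbation of $\sum_j s_j^k$ has norm at most $n\tau$, so Lemma~\ref{l.tzeta} (with $\zeta$ scaling linearly in the perturbation radius) turns $\sum_j s_j^k+\sum_i n\nu_i^k$ into an $n\zeta$-subgradient and enlarges the per-node gradient bound from $M$ to $M+\tau$, giving $\|\sum_j s_j^k+\,n\nu_i^k\|\le n(M+\tau)$. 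Second, each $s_j^k$ is evaluated at the local $(\mu_j^k,\G_j^k)$ rather than at $(\bar\mu^k,\bar\G^k)$; writing the concavity inequality for $q_j$ at the local point and adding and subtracting the average, the mismatch is controlled by $\|s_j^k\|$ times the agreement gap, which Corollary~\ref{co.1} drives to at most $2\beta_\infty$. Third, transferring the final bound from the average to node $i$ uses the $nM$-Lipschitz continuity of $q=\sum_j q_j$ and the same $\|(\mu_i^k,\G_i^k)-(\bar\mu^k,\bar\G^k)\|\le2\beta_\infty$. Collecting the second and third contributions is where the coefficient $9M+3\tau$ multiplying $\beta_\infty$ is assembled, while the factor $n$ on both $\zeta$ and the $\beta_\infty$-term originates in aggregating the per-node errors.

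With these ingredients the averaged descent relation reads, for the combined point,
\begin{equation*}
q^*-q(\bar\mu^k,\bar\G^k)\le\tfrac{n}{2\alpha}\big(\|D^k\|^2-\|D^{k+1}\|^2\big)+\tfrac{\alpha n(M+\tau)^2}{2}+n\zeta+n(9M+3\tau)\beta_\infty,
\end{equation*}
where $D^k:=(\bar\mu^k,\bar\G^k)-(\mu^*,\G^*)$; the quadratic floor $\alpha n(M+\tau)^2/2$ comes from the step $\alpha/n$ against the bound $n(M+\tau)$. The constant-stepsize error-floor conclusion then follows by the usual contradiction argument: if $q(\bar\mu^k,\bar\G^k)$ stayed strictly below $q^*-\alpha n(M+\tau)^2/2-n(\beta_\infty(9M+3\tau)+\zeta)$ for all large $k$, the relation above would force $\|D^k\|^2$ to decrease by a fixed positive amount at every step, contradicting $\|D^k\|^2\ge0$; hence the $\limsup$ of $q(\bar\mu^k,\bar\G^k)$ satisfies the stated bound, and the node-transfer step extends it to $q(\mu_i^k,\G_i^k)$ for every $i$. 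The case $q^*=\infty$ is handled by running the identical inequality against an arbitrarily large feasible target value in place of $(\mu^*,\G^*)$ and letting that value tend to infinity, which forces $\limsup_k q(\mu_i^k,\G_i^k)=\infty$.

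The hard part, in my estimation, is the bookkeeping of the previous paragraph: keeping the mixing perturbation (routed through $\beta_0/\alpha\le\tau$ into Lemma~\ref{l.tzeta}, producing $\zeta$ and $M+\tau$) distinct from the agreement perturbation (routed through Corollary~\ref{co.1}, producing the $9M+3\tau$ coefficient), and propagating the factor $n$ correctly. The non-commutation of averaging and projection is the single most delicate point, since it is the only place where one must relinquish an exact subgradient-step identity for the mean and settle for the convexity inequality together with per-node nonexpansiveness.
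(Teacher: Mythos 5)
Your strategy is sound and lands on the same architecture as the paper's proof --- an inexact $\epsilon$-subgradient ascent with the same three error sources (consensus displacement routed through Lemma~\ref{l.tzeta} into $\tau$ and $\zeta$; local-vs-average mismatch of the subgradients controlled by the agreement bound; a final Lipschitz transfer back to each node, which is where $6M+3\tau$ becomes $9M+3\tau$) --- but it differs in one structural choice. The paper does not track the average of the projected iterates $\bar{\z}^k$; it tracks $\y^k := \mathsf{P}_{Z}[\bar{\v}^{k-1}]$, the projection of the \emph{average of the pre-projection iterates} (see~\eqref{ydef}). Because $\W^\varphi$ is doubly stochastic, $\bar{\v}^k = \y^k + \d^k + \frac{\alpha}{n}\sum_i \h_i(\tilde{\x}^k)$ with $\d^k = \bar{\z}^k - \y^k$, so $\y^k$ obeys an \emph{exact} projected recursion $\y^{k+1}=\mathsf{P}_Z[\y^k + \frac{\alpha}{n}\g(\tilde{\x}^k)]$ in which the entire averaging/projection non-commutation is packaged into the single vector $\d^k/\alpha$, certified as a perturbation of size $\leq\tau$ and absorbed via Lemma~\ref{l.tzeta}; this lets the paper invoke \cite[Proposition~4.1]{Nedich2002} verbatim and then transfer to each node via $q(\y^k)\leq q(\z_i^k)+3nM\beta_{k-1}$ (inequality~\eqref{dummy2}). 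Your route keeps $\bar{\z}^k$ and pays for the non-commutation with Jensen on $\|\cdot\|_2^2$ plus per-node nonexpansiveness, then re-derives the constant-stepsize $\limsup$ bound by the contradiction argument rather than citing it. That is workable, with two caveats: the per-row Jensen step $\|\sum_j[\W^\varphi]_{ij}\uu_j-\z^*\|_2^2\leq\sum_j[\W^\varphi]_{ij}\|\uu_j-\z^*\|_2^2$ needs $[\W^\varphi]_{ij}\geq 0$, which conditions~\eqref{c.con} do not guarantee (so you would either assume nonnegative weights or insert the $\|\v_i^k-\bar{\v}^k\|_2\leq\beta_k$ bound and accept a cross term); and once you apply the subgradient inequality at each node's own point $\z_j^k$, it is not obvious that your recursion still needs the $\tau$ and $\zeta$ terms at all --- if it does not, you would get a genuinely tighter constant than the stated one, and if it does, it is because you have implicitly reintroduced the paper's $\y^k$. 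What the paper's choice buys is a clean black-box reduction; what yours buys is dispensing with the auxiliary sequence and, potentially, a sharper error floor.
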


Theorem~\ref{th.2} implies dual objective convergence up to a bounded error floor. Convergence is even more evident if we remember that, owning to optimality, $q(\mu^k_i, \G_i^k) \leq q^*$, and thus, if we define $q_i^{\infty} : = \limsup_{k\to \infty} q(\mu^k_i, \G_i^k)$, we obtain
$$
0 \geq q_i^{\infty} - q^* \geq - \alpha n (M + \tau)^2/2 - n(\beta_\infty(9 M + 3\tau) + \zeta) =: -\varepsilon^2.
$$
Note that the rightmost term ($-\varepsilon^2$) represents a measure of sub-optimality of the approximate solution. 
%


\begin{theorem}\label{th.3}
\emph{(Primal objective convergence)} Let ${\mu}^k, {\G}^k, \x^k$ be the dual and primal variables generated via the algotithm~\eqref{alg}.  Let $\mu_i^0$ and $\G^0_i$ for all $i\in\myset{V}$ be bounded and let $\beta_0$ be defined as in~Theorem~\ref{th.1}. Define $L$ and $Q$ as in~\eqref{LQ}, $\Lambda$ and $\Gamma$ as in~\eqref{LG}, and let $M:= L+Q$. Choose a scalar $\tau$ such that $\beta_0 / \alpha \leq \tau$. Let $\zeta$ be defined as in Lemma~\ref{l.tzeta} for the concave function $q(\mu,\G)$ and the choice of $\tau$. Let $f^*$ be the optimal value of $f(\x)$. Let Assumptions~\ref{as.0} till \ref{as.slater} hold and let $\W$ satisfy the conditions~\eqref{c.con}. Let $\varphi \geq \bar{\varphi} + \delta, \delta \geq 0$ and let $\bar{\varphi}$ be defined as in Theorem~\ref{th.1}. The following holds true.

\begin{enumerate}
\item[(a)] An upper bound on the primal cost of the vector $\x^k$, $k\geq 1$, is given by
\begin{equation*}
f(\x^k) \leq f^* + \frac{\Lambda^2 + \Gamma^2}{2 k \alpha/n} +e_k;
\end{equation*}
\item[(b)] A lower bound on the primal cost of the vector $\x^k$, $k\geq 1$, is given by
\begin{equation*}
f(\x^k) \geq f^* - \frac{9(\Lambda^2 + \Gamma^2)}{2 k \alpha/n} - e_k;
\end{equation*}
\end{enumerate}
where 
$$
e_k =   \frac{\alpha n(M+\tau)^2}{2} + n \tau (\Lambda + \Gamma) +  n (\beta_0(6M + 3\tau) + \zeta).
$$
\end{theorem}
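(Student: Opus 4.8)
The plan is to recast the node updates in~\eqref{alg} as a single \emph{perturbed projected $\epsilon$-subgradient ascent} on the \emph{averaged} dual variables $(\bar{\mu}^k,\bar{\G}^k)$, and then to replay on this averaged iterate the primal-recovery argument used for Theorem~\ref{th.0} (the machinery of \cite[Lemma~3 and Proposition~1]{Nedic2009a}), paying for every deviation with the agreement bound of Theorem~\ref{th.1} and the $\epsilon$-subgradient certificate of Lemma~\ref{l.tzeta}. The $1/n$ appearing in the denominators $k\alpha/n$ is the signature of this reduction: after averaging, the effective stepsize seen by $(\bar{\mu}^k,\bar{\G}^k)$ is $\alpha/n$.

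First I would derive the averaged dynamics. Summing Step~4 of~\eqref{alg} over $i$ and dividing by $n$, double stochasticity of $\W^\varphi$ (which follows from $\W=\W^\transp$ and $\W\mathbf{1}_n=\mathbf{1}_n$) makes the pre-projection average exactly $\bar{\mu}^k+(\alpha/n)h(\tilde{\x}^k)$ and $\bar{\G}^k+(\alpha/n)\Q(\tilde{\x}^k)$. Since averaging does not commute with the projections onto $D_\mu$ and $D_\G$, I would write $\bar{\mu}^{k+1}=\mathsf{P}_{D_{\mu}}[\bar{\mu}^k+(\alpha/n)h(\tilde{\x}^k)]+r_\mu^k$ and bound the residual $\|r_\mu^k\|_2$ (and its $\G$-analogue) by nonexpansiveness of the projection together with Theorem~\ref{th.1}; this is where the disagreement scale $\beta_0$ enters. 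Next, because each $\tilde{x}_j^k$ minimizes $L_j(\cdot,\mu_j^k,\G_j^k)$ rather than $L_j(\cdot,\bar{\mu}^k,\bar{\G}^k)$, the aggregate $(h(\tilde{\x}^k),\Q(\tilde{\x}^k))$ is not an exact subgradient of the separable $q=\sum_j q_j$ at the average point; I would write it as a true subgradient plus a perturbation of size equal to the dual disagreement $\|\mu_j^k-\bar{\mu}^k\|_2+\|\G_j^k-\bar{\G}^k\|_{\mathrm{F}}$, which the choice $\beta_0/\alpha\le\tau$ keeps below $\tau$, so Lemma~\ref{l.tzeta} certifies it as an $\epsilon$-subgradient of $q$ at $(\bar{\mu}^k,\bar{\G}^k)$ with tolerance $\zeta$. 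This produces the $\tau$ and $\zeta$ contributions in $e_k$.

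With these two facts I would run the telescoping squared-distance recursion on $\|\bar{\mu}^k-\mu\|_2^2+\|\bar{\G}^k-\G\|_{\mathrm{F}}^2$, using projection nonexpansiveness, the $\epsilon$-subgradient inequality~\eqref{egrad} in place of the exact one, and the residual $r^k$; the result is the analogue of Theorem~\ref{th.0} with stepsize $\alpha/n$ and extra $\beta_0$, $\tau$, $\zeta$ terms. For part~(a) I would then bound $f(\x^k)\le\frac1k\sum_{t=1}^k f(\tilde{\x}^t)$ by convexity, substitute the Lagrangian identity $f_j(\tilde{x}_j^t)=q_j(\mu_j^t,\G_j^t)-\mu_j^t g_j(\tilde{x}_j^t)-\tr[\Q_j(\tilde{x}_j^t)\G_j^t]$, replace the local duals by their averages (paying disagreement terms), telescope, take the comparison point at the origin $(\mathbf{0},\mathbf{0})\in D_\mu\times D_\G$ so the initial term is $\|\bar{\mu}^1\|_2^2+\|\bar{\G}^1\|_{\mathrm{F}}^2\le\Lambda^2+\Gamma^2$, and use $\frac1k\sum_t q(\cdot)\le q^*=f^*$. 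Collecting the quadratic term (where $\|h\|_2,\|\Q\|_{\mathrm{F}}\le nM$ gives $\alpha n(M+\tau)^2/2$), the perturbation-times-dual-bound term $n\tau(\Lambda+\Gamma)$, and the aggregated consensus/$\epsilon$ tolerance $n(\beta_0(6M+3\tau)+\zeta)$ reproduces $e_k$.

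For the lower bound~(b) I would mirror the proof of Theorem~\ref{th.0}\emph{(c)}: start from $f(\x^k)\ge f^*-\mu^* h(\x^k)-\tr[\Q(\x^k)\G^*]$, bound the averaged accumulated subgradients so that $h(\x^k)$ and the positive semi-definite matrix $\bar{\G}^k/(k\alpha/n)-\Q(\x^k)\succeq 0$ are controlled, then use \cite[Lemma~1]{Wang1986} with Cauchy--Schwarz exactly as in~\eqref{dummy40}, and bound $\mu^*,\G^*$ through Lemma~\ref{bound}; the consensus and $\epsilon$-subgradient errors accumulated through the telescoping are what inflate the constant from $1$ to $9$. The hardest part will be precisely this bookkeeping in the two preceding steps: proving rigorously that the locally computed subgradients aggregate to a genuine $\epsilon$-subgradient at the \emph{average} dual point, and then threading the disagreement bound $\beta_0$, the tolerance $\zeta$, and the $\alpha/n$ scaling through the telescoping sum so that the exact coefficients in $e_k$ (the $6M+3\tau$ and the factor $9$) emerge correctly.
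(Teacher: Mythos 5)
Your reduction to a perturbed projected $\epsilon$-subgradient method on an averaged dual iterate with effective stepsize $\alpha/n$ is essentially the paper's strategy, and your part~(a) tracks the paper's proof of Theorem~\ref{th.3}.\emph{(a)} almost step for step (convexity of $f$, the Lagrangian identity, the $\epsilon$-subgradient inequality at the averaged point, comparison point $\y=\mathbf{0}$, and $q\le q^*=f^*$). One bookkeeping difference: the paper carries no residual $r^k$ outside the projection. It defines $\y^k:=\mathsf{P}_Z[\bar{\v}^{k-1}]$, the projection of the average of the \emph{pre}-projection iterates, and folds the mismatch $\d^k=\bar{\z}^k-\y^k$ into the search direction as $\d^k/\alpha$, so that $\y^{k+1}=\mathsf{P}_Z[\y^k+(\alpha/n)\g(\tilde{\x}^k)]$ holds exactly with $\g(\tilde{\x}^k)=\sum_{i}(\h_i(\tilde{\x}^k)+\d^k/\alpha)$ certified as an $\epsilon_k$-subgradient by Lemma~\ref{l.tzeta}. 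Your variant with an additive residual after the projection can be made to work, but it generates extra cross terms $\langle r^k,\cdot\rangle$ in the telescoping that you would have to absorb into $e_k$, likely with different constants.

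The genuine gap is in part~(b). You propose to mirror Theorem~\ref{th.0}.\emph{(c)}: establish $h(\x^k)\le\bar{\mu}^k/(k\alpha/n)$ and $\bar{\G}^k/(k\alpha/n)-\Q(\x^k)\succeq 0$ and then invoke \cite[Lemma~1]{Wang1986}. Those two relations are obtained in Theorem~\ref{th.0} from the inequality $\mathsf{P}_{\mathbb{R}_+}[a]\ge a$ (and its $\mathbb{S}_+^d$ analogue), i.e., from projecting onto a \emph{cone}. In algorithm~\eqref{alg} the projections are onto the truncated sets $D_\mu$ and $D_\G$ of~\eqref{itbound}, for which $\mathsf{P}_{D_\mu}[a]\ge a$ fails, so the telescoped constraint-violation bound does not go through, and the consensus mixing compounds the difficulty. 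The paper avoids this entirely: it writes $f(\x^k)=\bigl(f(\x^k)+\langle\y^*,\frac{1}{k}\sum_{t}\g(\tilde{\x}^t)\rangle\bigr)-\langle\y^*,\frac{1}{k}\sum_{t}\g(\tilde{\x}^t)\rangle$, lower-bounds the bracket by $f^*-n\tau(\Lambda+\Gamma)$ using convexity of the $\h_i$ and the saddle-point property of the Lagrangian, and upper-bounds the subtracted term via Lemma~\ref{itiy}.\emph{(b)} with comparison point $\y=2\y^*\in Z$ (admissible by the choice of $r$). Relatedly, your explanation of the factor $9$ is incorrect: it does not come from accumulated consensus errors but from the purely algebraic bound $\|\y^1-2\y^*\|_2^2\le(\|\y^1\|_2+2\|\y^*\|_2)^2\le 9(\Lambda^2+\Gamma^2)$ on the initial distance to that shifted comparison point. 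Without the saddle-point/shifted-comparison-point device, your route for~(b) stalls at the violation bound.
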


Theorem~\ref{th.3} formulates convergence of the primal cost up to an error bound $e_k$. The rate of convergence is $O(1/k)$. We can also distinguish the error terms that come from the constant stepsize $\alpha$ and the terms that come from the finite number of consensus steps $\varphi$. In particular, we can write
$$
e_k =  \underbrace{\frac{\alpha n M^2}{2}}_{(1)} + \underbrace{\frac{\alpha n (2M\tau + \tau^2)}{2} + n \tau (\Lambda + \Gamma) +  n (\beta_0(6M + 3\tau) + \zeta)}_{(2)},
$$
and see that the term (1) is due to the constant stepsize, while the term (2) is due to the finite number of consensus steps. Furthermore, if $\varphi \to \infty$, then $c_0 = 0$, and we can set $\beta_0 = \tau = \zeta = 0$, yielding
$$
\lim_{\varphi\to\infty} e_k = \frac{\alpha n M^2}{2}.
$$
This is similar to the error level we obtain for the dual decomposition method in~\eqref{it.dds}, and Theorem~\ref{th.0}. 
Theorem~\ref{th.3} defines the main trade-offs in designing the algorithm's parameters $\alpha$ and $\varphi$. The larger the stepsize $\alpha$ is, the faster the convergence is, even though the steady-state error becomes larger. If we increase $\varphi$ then the communication effort increases and the error $e_k$ decreases. 

\section{Proof of Theorem~\ref{th.1} and Theorem~\ref{th.2}}\label{sec:6}

\subsection{Preliminaries}

We start our analysis by rewriting Step 4 of \eqref{alg} in a more compact way. Let $\z_i \in \mathbb{R}^{1+d^2}$ be the vector defined as  $\z_i := (\mu_i, \vec(\G_i)^\transp)^\transp$, and let $\zv$ be the stacked vector of all the $\z_i$, $i\in\myset{V}$. Similarly, let $\h_i(\x)$ be the vector $\h_i(\x):= (g_i(x_i), \vec(-\A_0/n - \A_i x_i)^\transp)^\transp$, and let $\hv(\x)$ the stacked vector of all the $\h_i(\x)$, $i\in\myset{V}$. Let $Z$ be the convex set 
\begin{equation}\label{set}
Z:= \{\z := (\mu, \vec(\G)^\transp)^\transp \in \mathbb{R}^{1+d^2}| \mu \in D_{\mu}, \G \in D_{\G} \},
\end{equation}
and let $\Zv = \prod_{i=1}^n Z$. The iterations in Step 4 of \eqref{alg} can be rewritten as 
\begin{equation}\label{it}
\zv^{k+1} = \mathsf{P}_{\Zv}\left[ \W^\varphi \otimes {\bf I}_{{1+d^2}} \left(\zv^k + \alpha \hv(\tilde{\x}^k)\right) \right]. 
\end{equation}
The iteration~\eqref{it} represents a consensus-based subgradient method to maximize the dual function $q(\mu, \G)$, i.e, the maximization problem 
\begin{equation*}
q^* : = \max_{\mu \in D_{\mu}, \G \in D_{\G}} \, \sum_{i\in\myset{V}} q_i(\mu, \G) \equiv  \max_{\z \in Z} \, \sum_{i\in\myset{V}} q_i(\z), \quad \textrm{for } \z =  (\mu, \vec(\G)^\transp)^\transp.
\end{equation*}
In particular~\eqref{it} assigns to each node a copy of $\z$, $\z_i$, and enforces consensus among them. Furthermore, by~\eqref{LQ}, by triangle inequality, and by~\eqref{LG},
\begin{subequations}
\begin{align}\label{MM}
\|\h_i(\x)\|_2 &\leq \|h_i(\x)\|_2 + \|\Q_i(\x)\|_{\mathrm{F}} = L + Q = M, \quad \|\hv(\x)\|_2 \leq nM,\\ \max_{\z \in Z}\|\z\|_2 &\leq \sqrt{\Lambda^2 + \Gamma^2} \leq \Lambda + \Gamma.
\end{align}
\end{subequations}

\begin{lemma}\label{l.basic}\emph{(\cite[Lemma~1]{Johansson2008})}
Let $\x_i\in\mathbb{R}^m, i\in\myset{V}$ be $m$-dimensional vectors. Let $\bar{\x}$ be the average value of $\x_i, i\in\myset{V}$, i.e., $\bar{\x} = \frac{1}{n}\sum_{i\in\myset{V}}\x_i$. The following basic relations hold, 
\begin{enumerate}
\item[(a)] if $\|\x_i - \x_j\|_2 \leq \beta,\, \forall i,j\in\myset{V}$, then $\|\x_i - \bar{\x}\|_2 \leq \frac{n-1}{n} \beta$;
\item[(b)] if $\|\x_i - \bar{\x}\|_2 \leq \beta, \, \forall i\in\myset{V}$, then $\|\x_i - \x_j\|_2 \leq 2\beta$.
\end{enumerate}
\end{lemma}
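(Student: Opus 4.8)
The plan is to prove both inequalities directly from the definition of the average $\bar{\x} = \frac{1}{n}\sum_{j\in\myset{V}}\x_j$ using only the triangle inequality; no machinery beyond elementary vector estimates is required.

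For part (a), the first step is to rewrite the deviation of $\x_i$ from the mean as an average of pairwise differences. Since $\sum_{j\in\myset{V}} \x_j = n\bar{\x}$, I would write
$$
\x_i - \bar{\x} = \x_i - \frac{1}{n}\sum_{j\in\myset{V}} \x_j = \frac{1}{n}\sum_{j\in\myset{V}} (\x_i - \x_j).
$$
The key observation is that the summand vanishes for $j=i$, so the sum effectively runs over the $n-1$ indices $j\neq i$. Applying the triangle inequality and the hypothesis $\|\x_i - \x_j\|_2 \leq \beta$ to each of these $n-1$ nonzero terms then gives
$$
\|\x_i - \bar{\x}\|_2 \leq \frac{1}{n}\sum_{j\neq i} \|\x_i - \x_j\|_2 \leq \frac{n-1}{n}\,\beta,
$$
which is exactly the claim. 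The only nonroutine point is recognizing that the $j=i$ term drops out, which is what sharpens the constant from $1$ to $\frac{n-1}{n}$.

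For part (b), the step is simply to insert and subtract the mean: writing $\x_i - \x_j = (\x_i - \bar{\x}) - (\x_j - \bar{\x})$ and applying the triangle inequality together with the hypothesis $\|\x_i - \bar{\x}\|_2 \leq \beta$ (valid for every index) yields $\|\x_i - \x_j\|_2 \leq \|\x_i - \bar{\x}\|_2 + \|\x_j - \bar{\x}\|_2 \leq 2\beta$. I do not anticipate any genuine obstacle here: both parts are standard averaging estimates, and the whole argument reduces to careful bookkeeping of the number of terms in the sum for part (a) and a single triangle-inequality split for part (b).
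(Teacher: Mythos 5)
Your proof is correct: both parts follow exactly as you describe, with the $j=i$ term dropping out in part (a) to give the sharpened constant $\frac{n-1}{n}$, and a single triangle-inequality split handling part (b). The paper itself provides no proof of this lemma — it is imported verbatim from \cite[Lemma~1]{Johansson2008} — and your elementary argument is the standard one that reference uses, so there is nothing to reconcile.
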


\begin{lemma}\label{l.basic2}\emph{(\cite[Lemma~2]{Johansson2008})}
Let $\x^k \in\mathbb{R}^n$ be an $n$-dimensional vector, with components $x_i\in\mathbb{R}, i=1,\dots,n$. Let $\x^{k+1} = \W^\varphi \x^k$, with $\W\in\mathbb{R}^{n\times n}$ fulfilling conditions~\eqref{c.con}. Let $\|x^k_i - x^k_j\|_2 \leq \sigma$, for a bounded $\sigma$, and for all $i,j = 1,\dots,n$. Then  $\|x^{k+1}_i - x^{k+1}_j\|_2 \leq 2 \nu^\varphi n \sigma$ for all $i,j = 1,\dots,n$.
\end{lemma}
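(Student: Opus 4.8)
The plan is to reduce the pairwise disagreement to the distance of each component from the (preserved) mean, and then to show that the operator $\W^\varphi$ contracts that distance by a factor $\nu^\varphi$. Introduce the averaging projector $P := \mathbf{1}_n\mathbf{1}_n^\transp/n$ and let $\bar{x} := \frac{1}{n}\sum_{i} x_i^k$. First I would record three elementary identities following from~\eqref{c.con}: $P^2 = P$; $\W P = P\W = P$ (because $\W\mathbf{1}_n = \mathbf{1}_n$ and, by symmetry, $\mathbf{1}_n^\transp\W = \mathbf{1}_n^\transp$); and, by an immediate induction using these, $\W^\varphi - P = (\W - P)^\varphi$. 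The same relations give mean preservation, $\frac{1}{n}\mathbf{1}_n^\transp\x^{k+1} = \frac{1}{n}\mathbf{1}_n^\transp\W^\varphi\x^k = \frac{1}{n}\mathbf{1}_n^\transp\x^k = \bar{x}$, so the mean of $\x^{k+1}$ is again $\bar{x}$.

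Next I would write the deviation of $\x^{k+1}$ from its mean as $\x^{k+1} - \bar{x}\mathbf{1}_n = (\W^\varphi - P)\x^k = (\W - P)^\varphi\x^k$, and observe that since $(\W - P)\mathbf{1}_n = \mathbf{0}$ one may replace $\x^k$ by $\x^k - \bar{x}\mathbf{1}_n$ on the right without changing it, giving $\x^{k+1} - \bar{x}\mathbf{1}_n = (\W - P)^\varphi(\x^k - \bar{x}\mathbf{1}_n)$. Because $\W$ is symmetric, $\W - P$ is symmetric, so its induced $\ell_2$ norm equals its spectral radius, which by~\eqref{c.con} is at most $\nu$; hence $\|(\W - P)^\varphi\|_2 \leq \nu^\varphi$ and $\|\x^{k+1} - \bar{x}\mathbf{1}_n\|_2 \leq \nu^\varphi\|\x^k - \bar{x}\mathbf{1}_n\|_2$.

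Finally I would bound the initial deviation using Lemma~\ref{l.basic}(a): from $\|x_i^k - x_j^k\|_2 \leq \sigma$ we get $\|x_i^k - \bar{x}\|_2 \leq \frac{n-1}{n}\sigma$ for every $i$, whence $\|\x^k - \bar{x}\mathbf{1}_n\|_2 \leq \|\x^k - \bar{x}\mathbf{1}_n\|_1 \leq (n-1)\sigma \leq n\sigma$. Combining with the contraction yields $\|x_i^{k+1} - \bar{x}\|_2 \leq \|\x^{k+1} - \bar{x}\mathbf{1}_n\|_2 \leq \nu^\varphi n\sigma$ for every $i$, and one triangle inequality (equivalently Lemma~\ref{l.basic}(b)) gives $\|x_i^{k+1} - x_j^{k+1}\|_2 \leq 2\nu^\varphi n\sigma$, as claimed. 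The only genuinely delicate point is the passage from the spectral-radius bound in~\eqref{c.con} to an operator-norm contraction of $(\W-P)^\varphi$: this is exactly where symmetry of $\W$ is needed, since for a non-symmetric consensus matrix the spectral radius would not control $\|(\W - P)^\varphi\|_2$ at finite $\varphi$. Everything else is the projector algebra for $P$ and the two bookkeeping estimates of Lemma~\ref{l.basic}.
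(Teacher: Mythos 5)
Your proof is correct, and it is worth noting that the paper itself offers no argument for this lemma: it is imported verbatim from the cited reference (Johansson et al., Lemma~2), so your write-up supplies a complete, self-contained justification of a result the paper only quotes. The projector algebra ($\W P = P\W = P$, hence $\W^\varphi - P = (\W - P)^\varphi$ and mean preservation), the conversion of the spectral-radius condition in~\eqref{c.con} into the operator-norm contraction $\|(\W - P)^\varphi\|_2 \leq \nu^\varphi$ via symmetry of $\W$, and the two bookkeeping passes through Lemma~\ref{l.basic} are all sound; your closing remark correctly isolates the one genuinely load-bearing step, namely that for a non-symmetric $\W$ the spectral radius would not bound $\|(\W-P)^\varphi\|_2$ at finite $\varphi$. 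Incidentally, your chain actually yields the slightly sharper constant $2\nu^\varphi (n-1)\sigma$ before you relax $(n-1)$ to $n$ to match the stated bound, so the conclusion holds with room to spare.
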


\begin{lemma}\label{agr}
Let $\{\zv^k\}$ be generated by~\eqref{it} under Assumptions~\ref{as.0} till \ref{as.slater}. Let $\v^k_i\in\mathbb{R}^{1+d^2}$, for all $i\in\myset{V}$ be defined as 
\begin{equation*}
\v^{k}_i = \sum_{j\in V}[\W^\varphi]_{ij} \left(\z_j^k + \alpha \h_j(\tilde{\x}^k)\right), 
\end{equation*}
and let $\bar{\v}^k$ be the average value of $\v^k_i, i\in\myset{V}$, i.e., $\bar{\v}^k = \frac{1}{n}\sum_{i\in\myset{V}}\v_i^k$. There exists a $\bar{\varphi}\geq 1$, such that if $\varphi\geq \bar{\varphi}+\delta$ with $\delta\geq 0$, then
$$ \|\v^k_i - \bar{\v}^k\|_2\leq \beta, \, \forall i \in \myset{V}\implies \|\v^{k+1}_i - \bar{\v}^{k+1}\|_2\leq \nu^\delta \beta, \, \forall i \in \myset{V}, k\geq 0. $$
\end{lemma}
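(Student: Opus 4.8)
The plan is to read the update as a consensus averaging applied to a perturbed stack of vectors, and to track how the inter-node spread contracts. Writing the pre-projection quantity at step $k+1$ as $\v_i^{k+1} = \sum_{j\in\myset{V}}[\W^\varphi]_{ij}\,\mathbold{w}_j^{k+1}$, where $\mathbold{w}_j^{k+1} := \z_j^{k+1} + \alpha\h_j(\tilde{\x}^{k+1})$ and, by~\eqref{it}, $\z_j^{k+1} = \mathsf{P}_{Z}[\v_j^k]$, I note that since $\W$ is symmetric and stochastic, $\W^\varphi$ is doubly stochastic; its column sums are one, so the mean is preserved, $\bar{\v}^{k+1} = \frac1n\sum_i\v_i^{k+1} = \frac1n\sum_j\mathbold{w}_j^{k+1}$. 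Hence the whole task reduces to quantifying how the averaging operator $\W^\varphi$ shrinks the disagreement among the $\mathbold{w}_j^{k+1}$.

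First I would bound the pairwise spread of the $\mathbold{w}_j^{k+1}$. Using that the Euclidean projection onto the convex set $Z$ is nonexpansive, $\|\z_i^{k+1}-\z_j^{k+1}\|_2 = \|\mathsf{P}_{Z}[\v_i^k]-\mathsf{P}_{Z}[\v_j^k]\|_2 \leq \|\v_i^k-\v_j^k\|_2$, and the hypothesis $\|\v_i^k-\bar{\v}^k\|_2\leq\beta$ together with Lemma~\ref{l.basic}(b) yields $\|\v_i^k-\v_j^k\|_2\leq 2\beta$. The subgradient terms add at most $\alpha\|\h_i(\tilde{\x}^{k+1})-\h_j(\tilde{\x}^{k+1})\|_2\leq 2\alpha M$ by the bound $\|\h_i\|_2\leq M$ in~\eqref{MM}. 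Combining these gives $\|\mathbold{w}_i^{k+1}-\mathbold{w}_j^{k+1}\|_2\leq 2(\beta+\alpha M)=:\sigma$ for all $i,j$.

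Next I would apply the consensus contraction. Because Lemma~\ref{l.basic2} is stated for scalar node values, I would use it coordinate-wise on each of the $1+d^2$ entries: for a fixed coordinate the scalars have pairwise gaps at most $\sigma$, so after multiplication by $\W^\varphi$ they shrink to $2\nu^\varphi n\sigma$, and reassembling the coordinates gives $\|\v_i^{k+1}-\v_j^{k+1}\|_2\leq 2\sqrt{1+d^2}\,\nu^\varphi n\sigma$. Converting back to deviations from the mean via Lemma~\ref{l.basic}(a) and using $(n-1)/n\leq1$ and $\sqrt{1+d^2}\leq 1+d^2$ produces $\|\v_i^{k+1}-\bar{\v}^{k+1}\|_2\leq 4n(1+d^2)\,\nu^\varphi(\beta+\alpha M)$. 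Finally, choosing $\bar{\varphi}$ so that $\nu^{\bar{\varphi}}=\beta_0/[4n(1+d^2)(\beta_0+\alpha M)]$ (exactly the value announced in Theorem~\ref{th.1}, and positive since the ratio lies below one, which also secures $\bar{\varphi}\geq1$) and splitting $\nu^\varphi\leq\nu^{\bar{\varphi}}\nu^\delta$ for $\varphi\geq\bar{\varphi}+\delta$ collapses the bound to $\nu^\delta\,\beta_0(\beta+\alpha M)/(\beta_0+\alpha M)$, which is $\leq\nu^\delta\beta$.

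The main obstacle I anticipate is precisely this last arithmetic: the additive subgradient spread $\alpha M$ is an irreducible floor that averaging cannot remove, so the clean implication ``$\leq\beta\Rightarrow\leq\nu^\delta\beta$'' forces $\beta_0(\beta+\alpha M)/(\beta_0+\alpha M)\leq\beta$, i.e. it genuinely requires $\beta\geq\beta_0$. This is why $\bar{\varphi}$ must be calibrated to the largest relevant disagreement $\beta_0\geq c_0$, which by Lemma~\ref{bita} and~\eqref{beta0} dominates the initial spread $\|\v_i^0-\bar{\v}^0\|_2$; applying the lemma only for $\beta\geq\beta_0$ is consistent with how it feeds the recursion in Theorem~\ref{th.1}. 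A secondary care point is the vector-valued nature of the iterate: the scalar Lemma~\ref{l.basic2} has to be applied per coordinate and reassembled, and this is exactly where the dimension factor $1+d^2$ enters $\bar{\varphi}$. Everything else is routine norm manipulation.
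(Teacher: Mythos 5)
Your proof is correct and follows essentially the same route as the paper's: the same decomposition into a nonexpansive projection step plus a bounded subgradient perturbation, the same coordinate-wise application of Lemma~\ref{l.basic2} with reassembly (you get a slightly tighter $\sqrt{1+d^2}$ before relaxing to $1+d^2$), and the same calibration of $\bar{\varphi}$. Your explicit observation that the clean implication only holds for $\beta\geq\beta_0$ is a valid and welcome clarification of a point the paper leaves implicit (its $\bar{\varphi}$ formally depends on $\beta$ inside the proof, and Theorem~\ref{th.1} sidesteps the issue by invoking the intermediate quantitative bound~\eqref{dummy10} directly for $\beta<\beta_0$).
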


\begin{proof}
The proof is an adaptation of \cite[Lemma~3]{Johansson2008}. In particular, we can show that for all $i,j\in V$
\begin{equation}\label{dummy10} \|\v^k_i - \bar{\v}^k\|_2\leq \beta \implies  \|\v^{k+1}_i - {\v}_j^{k+1}\|_2\leq 4\nu^{\varphi} n (1+d^2)( \beta + \alpha M). \end{equation}
Therefore, if we choose, 
\begin{equation*}
\varphi \geq \underbrace{\frac{\log(\beta) - \log(4 n (1+d^2) (\beta + \alpha M))}{\log (\nu)}}_{=: \bar{\varphi}} + \delta, \quad \delta \geq 0,
\end{equation*}
\begin{equation*} \hskip-0.56cm\textrm{then,\quad} \hskip0.56cm \|\v^k_i - \bar{\v}^k\|_2\leq \beta,\,\, \forall i \in \myset{V}\implies \|\v^{k+1}_i - {\v}_j^{k+1}\|_2\leq \nu^{\delta} \beta,\,\, \forall i,j \in \myset{V}, \end{equation*}
and the claim follows from Lemma~\ref{l.basic}.\emph{(a)}. In order to prove~\eqref{dummy10}, we proceed as follows. 
\begin{multline*}
\|\v^k_i - \bar{\v}^k\|_2\leq \beta,  \, \forall i \in \myset{V} \underbrace{\implies}_{\textrm{Lemma~\ref{l.basic}}} \|\v^k_i - {\v}^k_j\|_2\leq 2 \beta,\, \forall i,j \in \myset{V}   \\
{\implies} \|[\v^k_i - {\v}^k_j]_\ell\|_2\leq 2 \beta,\, \forall i,j \in \myset{V}, \,\ell=1,\dots,1+d^2, 
\end{multline*}
where $[\cdot]_\ell$ extracts the $\ell$-th component of a vector. Define 
$$
\uu_i^{k+1} = \mathsf{P}_{Z}[\v^k_i] + \alpha \h_i(\tilde{\x}^{k+1}), \quad \forall i\in\myset{V}.  
$$
Prior to consensus, the distance between the iterates can be bounded as
\begin{multline*}
\|\uu_i^{k+1}  - \uu_j^{k+1} \|_2 = \|\mathsf{P}_{Z}[\v^k_i] + \alpha \h_i(\tilde{\x}^{k+1}) - \mathsf{P}_{Z}[\v^k_j] - \alpha \h_j(\tilde{\x}^{k+1}) \|_2 \\ \leq 
\|\mathsf{P}_{Z}[\v^k_i] - \mathsf{P}_{Z}[\v^k_j] \|_2 + 2\alpha M   \leq \|\v^k_i- \v^k_j \|_2 +  2\alpha M \leq 2(\beta+\alpha M), 
\end{multline*}
which also implies $\|[\uu_{i}^k - \uu_j^k]_\ell\|_2\leq 2(\beta+\alpha M)$. Given that $\z_i^{k+1} = \mathsf{P}[\v_i^k], \forall i$, after consensus, we have
\begin{multline}\label{dummy11}
\|\v^{k+1}_i - \v^{k+1}_j \|_2 = \Big\|\sum_{p\in\myset{V}} [\W^\varphi]_{ip} \uu^{k+1}_{p} -  \sum_{p\in\myset{V}} [\W^\varphi]_{jp} \uu^{k+1}_{p}\Big\|_2 \\ \leq \sum_{\ell=1}^{1+d^2} \Big\|\Big[\sum_{p\in\myset{V}} [\W^\varphi]_{ip} \uu^{k+1}_{p} -  \sum_{p\in\myset{V}} [\W^\varphi]_{jp} \uu^{k+1}_{p} \Big]_\ell \Big\|_2 \\ = \sum_{\ell=1}^{1+d^2} \Big\|\sum_{p\in\myset{V}} [\W^\varphi]_{ip} [\uu^{k+1}_{p}]_\ell -  \sum_{p\in\myset{V}} [\W^\varphi]_{jp} [\uu^{k+1}_{p}]_\ell \Big\|_2 \\ = \sum_{\ell=1}^{1+d^2} \Big\|[\W^\varphi \tilde{\uu}_\ell^{k+1}]_i -  [\W^\varphi \tilde{\uu}_\ell^{k+1}]_j \Big\|_2, 
\end{multline}
where $\tilde{\uu}_\ell^{k+1} = ([\uu^{k+1}_{1}]_\ell, \dots,[\uu^{k+1}_{n}]_\ell)^\transp$. As said $\|[\uu_{i}^k - \uu_j^k]_\ell\|_2\leq 2(\beta+\alpha M)$ which means $\|[\tilde{\uu}_\ell^k]_i - [\tilde{\uu}_\ell^k]_j\|_2\leq 2(\beta+\alpha M)$. Thus, by using Lemma~\ref{l.basic2} we can bound~\eqref{dummy11} as
\begin{equation*}
\|\v^{k+1}_i - \v^{k+1}_j \|_2 \leq \sum_{\ell=1}^{1+d^2} \Big\|[\W^\varphi \tilde{\uu}_\ell^{k+1}]_i -  [\W^\varphi \tilde{\uu}_\ell^{k+1}]_j \Big\|_2 \leq 4 \nu^\varphi n (1+d^2)( \beta + \alpha M),
\end{equation*} 
which is the rightmost term in~\eqref{dummy10} and the claim is proven.
\qed
\end{proof}

\subsection{Proof of Theorem~\ref{th.1}}

The quantity $\|\v^{0}_i - \bar{\v}^{0}\|_2$ is upper bounded by $\beta_0\geq c_0$ by Lemma~\ref{bita} (inequality~\eqref{beta0}), thus, $\|\v^{0}_i - \bar{\v}^{0}\|_2\leq \beta_0$. Let us choose $\varphi\geq \bar{\varphi} + \delta$, $\delta\geq 0$, with $\bar{\varphi}$ determined as in Theorem~\ref{th.1}. Then, by Lemma~\ref{agr} and~\eqref{dummy10}, it follows that, 
\begin{eqnarray*}
\|\v^{1}_i - \bar{\v}^{1}\|_2 &\leq& \nu^\delta \beta_0\\
\|\v^{2}_i - \bar{\v}^{2}\|_2 &\leq& 4\nu^{\varphi} n (1+d^2)( \nu^\delta \beta_0 + \alpha M) = \nu^\delta \beta_0 \frac{\nu^\delta \beta_0 + \alpha M}{\beta_0 + \alpha M} \\
\|\v^{3}_i - \bar{\v}^{3}\|_2 &\leq&  \nu^\delta \beta_0 \frac{\nu^\delta \beta_0}{\beta_0 + \alpha M}\,\Big( \frac{\nu^\delta \beta_0 + \alpha M}{\beta_0 + \alpha M} + \alpha M\Big) \\
\|\v^{k}_i - \bar{\v}^{k}\|_2 &\leq& \nu^\delta \beta_0 \left(\frac{\nu^\delta \beta_0} {\beta_0 + \alpha M}\right)^{k-1} +  \alpha M \left(-1 + \sum_{t=0}^{k-1} \left(\frac{\nu^\delta \beta_0 }{\beta_0 + \alpha M}\right)^{t}\right).
\end{eqnarray*}
Let $p:= \frac{\nu^\delta \beta_0 }{\beta_0 + \alpha M}$, since $p < 1$, then 
\begin{equation}\label{dummy3}
\|\v^{k}_i - \bar{\v}^{k}\|_2 \leq p^{k-1} \nu^\delta \beta_0 + p \alpha M  \frac{1-p^{k-1}}{1-p}=:\beta_k,\quad k\geq 1
\end{equation}
and by Lemma~\ref{l.basic}.\emph{(b)}, we derive $\|\v^{k}_i - {\v}^{k}_j\|_2 \leq 2 \beta_k$. 
By using the non-expansive property of the projection operator, since $\z_i^{k+1} = \mathsf{P}[\v_i^k]$, for all $i$, we can write 
\begin{equation}\label{dummy1}
\|{\z}_{i}^{k+1} -{\z}_{j}^{k+1}\|_2 \leq \|\v^{k}_i - {\v}^{k}_j\|_2 \leq 2\beta_{k}, \quad k\geq 1,
\end{equation}
and by Lemma~\ref{l.basic}.\emph{(a)} the claim follows. 
\qed

\subsection{Proof of Theorem~\ref{th.2}}

%
We define an average value for $\zv^k$ as $\bar{\z}^{k} = \frac{1}{n}\sum_{i\in\myset{V}} \z_i^k$. For convergence purposes, we need to keep track of the difference $\bar{\z}^{k+1} - \mathsf{P}_{Z}[\bar{\v}^k]$, and thus we define the vectors $\y^k\in\mathbb{R}^{1+d^2}$ and $\d^k\in\mathbb{R}^{1+d^2}$ as 
\begin{equation}\label{ydef}
\y^k := \mathsf{P}_{Z} [\bar{\v}^{k-1}], \quad \d^k := \bar{\z}^k - \y^k, \quad k\geq 1.
\end{equation}
The main idea of the proof is to show that $\y$ is updated via an approximate $\epsilon$-subgradient method and, then, by using~\cite[Proposition~4.1]{Nedich2002} the theorem follows. The first part is formalized in the following lemma. 

\begin{lemma}\label{subgrad}
Let $\y^k$ be defined as in~\eqref{ydef}. Under the same conditions of Theorem~\ref{th.2}, for all $k\geq 1$,
\begin{enumerate}
\item[(a)] The quantity $\|{\d^k}/{\alpha}\|_2$ is upper bounded by $\beta_{k-1}/\alpha \leq \tau$ (where $\beta_k$ is defined in~\eqref{dummy3});
\item[(b)] The following inequalities are true,  for all $i\in V$
\begin{align}\label{dummy2}
q(\y^k) &\leq q(\z_i^k) + 3 n M \beta_{k-1} \\ 
\label{dummy5}
q_i(\y) & \leq q_i(\y^k) + \langle\h_i(\tilde{\x}^k)+\mathbold{\nu}, \y - \y^k\rangle + \epsilon_{k}/n, \quad \forall \y \in Z.
\end{align}

\item[(c)] The quantity $\g(\tilde{\x}^k) := \sum_{i\in\myset{V}}\left( \h_i(\tilde{\x}^k) + \frac{\d^k}{\alpha} \right)$ is an $\epsilon_k$-subgradient of $q (\y^k)$ with respect to $\y$.
\item[(d)] The variable $\y^k$ is updated via an $\epsilon$-subgradient method
\begin{equation}\label{iteps}
\y^{k+1} = \mathsf{P}_{Z} \Big[\y^k + \frac{\alpha}{n} \g(\tilde{\x}^k) \Big], \quad \g(\tilde{\x}^k) \in \partial_{\epsilon_k}q_{\y} (\y^k). 
\end{equation}
\end{enumerate}
And $\epsilon_k = n (\beta_{k-1}(6 M + 3\tau) +\zeta)$.
\end{lemma}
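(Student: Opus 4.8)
The plan is to reduce the whole lemma to a single structural fact: the projected average $\y^k=\mathsf{P}_Z[\bar{\v}^{k-1}]$ is driven by an \emph{inexact} ($\epsilon$-subgradient) projected ascent step on the full dual $q$, and the inexactness comes from two independent sources — the disagreement among the nodes, quantified by $\beta_{k-1}$ through Theorem~\ref{th.1}, and the perturbation $\d^k/\alpha$ of the subgradient, quantified by Lemma~\ref{l.tzeta}. Before attacking the four parts I would record two preliminaries. Since $\W$ is symmetric and doubly stochastic, $\sum_{i\in\myset{V}}[\W^\varphi]_{ij}=1$, so averaging $\v^k_i$ over $i$ collapses $\W^\varphi$ and gives
\begin{equation*}
\bar{\v}^k=\bar{\z}^k+\frac{\alpha}{n}\sum_{j\in\myset{V}}\h_j(\tilde{\x}^k).
\end{equation*}
From the proof of Theorem~\ref{th.1} I also have $\|\v_i^{k-1}-\bar{\v}^{k-1}\|_2\leq\beta_{k-1}$ and, combining~\eqref{dummy1} with Lemma~\ref{l.basic}.\emph{(a)}, $\|\z_i^k-\bar{\z}^k\|_2\leq 2\beta_{k-1}$ for every $i$ and $k\geq 1$.

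For part \emph{(a)} I would write $\d^k=\frac{1}{n}\sum_{i\in\myset{V}}(\mathsf{P}_Z[\v_i^{k-1}]-\mathsf{P}_Z[\bar{\v}^{k-1}])$ and bound it, using convexity of the norm and the non-expansiveness of $\mathsf{P}_Z$, by $\frac{1}{n}\sum_{i}\|\v_i^{k-1}-\bar{\v}^{k-1}\|_2\leq\beta_{k-1}$. Writing $\beta_k=p^{k-1}(\nu^\delta\beta_0)+(1-p^{k-1})\beta_\infty$ exhibits it as a convex combination of $\nu^\delta\beta_0\leq\beta_0$ and $\beta_\infty\leq\beta_0$, so $\beta_{k-1}\leq\beta_0$; with the hypothesis $\beta_0/\alpha\leq\tau$ this yields $\|\d^k/\alpha\|_2\leq\beta_{k-1}/\alpha\leq\tau$. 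A consequence I would isolate for later use is the geometric bound $\|\z_i^k-\y^k\|_2\leq\|\z_i^k-\bar{\z}^k\|_2+\|\bar{\z}^k-\y^k\|_2\leq 2\beta_{k-1}+\beta_{k-1}=3\beta_{k-1}$.

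Parts \emph{(b)}--\emph{(c)} hinge on the fact that $\h_i(\tilde{\x}^k)$ is an \emph{exact} subgradient of $q_i$ at $\z_i^k$, being the gradient of $L_i(\cdot,\mu_i^k,\G_i^k)$ at its minimizer $\tilde{x}_i^k$. Taking $\mathbold{\nu}=\d^k/\alpha$, which satisfies $\|\mathbold{\nu}\|_2\leq\tau$ by \emph{(a)}, Lemma~\ref{l.tzeta} promotes $\h_i(\tilde{\x}^k)+\mathbold{\nu}$ to a $\zeta$-subgradient of $q_i$ at $\z_i^k$. I would then move the base point from $\z_i^k$ to the common $\y^k$ in the resulting inequality: the shift of function value costs $q_i(\z_i^k)\leq q_i(\y^k)+M\|\z_i^k-\y^k\|_2$ (since $\|\h_i\|_2\leq M$, each $q_i$ is $M$-Lipschitz), while $\langle\h_i(\tilde{\x}^k)+\mathbold{\nu},\y^k-\z_i^k\rangle\leq(M+\tau)\|\z_i^k-\y^k\|_2$ by Cauchy--Schwarz. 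With $\|\z_i^k-\y^k\|_2\leq 3\beta_{k-1}$ these two terms sum to $3M\beta_{k-1}+3(M+\tau)\beta_{k-1}=\beta_{k-1}(6M+3\tau)$, which together with the $\zeta$ supplied by Lemma~\ref{l.tzeta} is precisely $\epsilon_k/n$; this is~\eqref{dummy5}. Inequality~\eqref{dummy2} follows identically, but with the $nM$-Lipschitz constant of $q=\sum_i q_i$ in place of $M$. Summing~\eqref{dummy5} over $i\in\myset{V}$, and using $q=\sum_i q_i$ and $\g(\tilde{\x}^k)=\sum_{i\in\myset{V}}(\h_i(\tilde{\x}^k)+\mathbold{\nu})$, gives $q(\y)\leq q(\y^k)+\langle\g(\tilde{\x}^k),\y-\y^k\rangle+\epsilon_k$ for all $\y\in Z$, which is the $\epsilon_k$-subgradient claim \emph{(c)}.

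Part \emph{(d)} is then immediate from the preliminary identity: since $\d^k=\bar{\z}^k-\y^k$, we get $\bar{\v}^k=\y^k+\d^k+\frac{\alpha}{n}\sum_{j\in\myset{V}}\h_j(\tilde{\x}^k)=\y^k+\frac{\alpha}{n}\g(\tilde{\x}^k)$, because $\frac{n}{\alpha}\d^k+\sum_{j}\h_j(\tilde{\x}^k)=\g(\tilde{\x}^k)$; as $\y^{k+1}=\mathsf{P}_Z[\bar{\v}^k]$ by definition, this is exactly~\eqref{iteps}, and \emph{(c)} certifies the direction as an $\epsilon_k$-subgradient. I expect the real work to sit in part \emph{(b)}, where one must handle simultaneously a \emph{shift of base point} (from $\z_i^k$ to $\y^k$, absorbed through Lipschitz continuity and the consensus bound $3\beta_{k-1}$) and a \emph{perturbation of the subgradient vector} (by $\d^k/\alpha$, absorbed into $\zeta$ by Lemma~\ref{l.tzeta}), making the constants collapse to exactly $\epsilon_k=n(\beta_{k-1}(6M+3\tau)+\zeta)$. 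A subtler point deserving a line of justification is that Lemma~\ref{l.tzeta}, stated for $q$, is invoked here for each $q_i$; this is legitimate because its constant enters only through the common domain radius $\eta=\max_{\z\in Z}\|\z\|_2$, which is shared by every $q_i$.
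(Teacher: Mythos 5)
Your proposal is correct and follows essentially the same route as the paper's proof: part \emph{(a)} via non-expansiveness of the projection and $\|\v_i^{k-1}-\bar{\v}^{k-1}\|_2\le\beta_{k-1}$, part \emph{(b)} by combining Lemma~\ref{l.tzeta} (applied to each $q_i$, with $\mathbold{\nu}=\d^k/\alpha$) with the base-point shift $\|\z_i^k-\y^k\|_2\le 3\beta_{k-1}$ so that the constants collapse to $\beta_{k-1}(6M+3\tau)+\zeta$, and parts \emph{(c)}--\emph{(d)} by summing over $i$ and collapsing $\W^\varphi$ through double stochasticity. The only imprecision is the aside describing $\h_i(\tilde{\x}^k)$ as ``the gradient of $L_i(\cdot,\mu_i^k,\G_i^k)$ at its minimizer'' — it is in fact the subgradient of $q_i$ obtained by evaluating the dual-variable partial of $L_i$ at the primal minimizer — but the fact actually used is the correct one and is the same one the paper relies on.
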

\begin{proof}\color{white}{a}\color{black}

\noindent\emph{(a)} We start by bounding $\|\d^k\|_2$, 
\begin{equation*}
\|\d^k\|_2 = \frac{1}{n} \Big\|\sum_{i\in\myset{V}} \Big(\mathsf{P}_Z[\v_i^{k-1}] - \mathsf{P}_Z[\bar{\v}^{k-1}] \Big)\Big\|_2 \leq \frac{1}{n} \sum_{i\in\myset{V}} \| \v_i^{k-1}-\bar{\v}^{k-1} \|_2 \leq \beta_{k-1},
\end{equation*}
where we have used the inequality~\eqref{dummy3} to bound the term $\| \v_i^{k-1}-\bar{\v}^{k-1} \|_2$. 

\noindent \emph{(b)} Since $\y^k\in Z$ and $\z_i^k \in Z$, by the concavity of $q_i(\z)$ and the definition of subgradient of a concave function~\eqref{egrad}, we can write for all $i,j \in V$
\begin{align*}
q_j(\y^k) &\leq q_j(\z_i^k) + \langle\h,\y^k-\z_i^k\rangle,\quad \textrm{where } \h \in \partial q_{j,\z}(\z_i^k)\\ 
&\leq q_j(\z_i^k) + \|\h\|_2 \|\z_i^k - \y^k\|_2 \leq q_j(\z_i^k) + M ( \|\z_i^k - \bar{\z}^k\|_2 + \|\d^k\|_2) \\
& \leq q_j(\z_i^k) + M (2 \beta_{k-1} + \beta_{k-1}) \leq q_j(\y^k) + 3 M \beta_{k-1}.
\end{align*}
In particular, we have used the fact that any subgradient vector of $q_j(\z)$ is bounded by $M$ \eqref{MM}, and inequality~\eqref{dummy1}. If we sum the last relation over $j\in\myset{V}$, we obtain~\eqref{dummy2}.
In addition for any $\y\in Z$, by using Lemma~\ref{l.tzeta}
\begin{align*}
q_i(\y) &\leq q_i(\z_i^k) + \langle\h_i(\tilde{\x}^k), \y\hskip-0.05cm - \hskip-0.05cm\z_i^k\rangle\leq q_i(\z_i^k) + \langle\h_i(\tilde{\x}^k)+\mathbold{\nu},\y \hskip-0.05cm-\hskip-0.05cm \z_i^k\rangle+\zeta\\ 
&\leq q_i(\y^k) + \langle\h_i(\tilde{\x}^k)+\mathbold{\nu},\y\hskip-0.05cm -\hskip-0.05cm \z_i^k\rangle + 3 M \beta_{k-1} + \zeta  \\ 
&= q_i(\y^k) + \langle\h_i(\tilde{\x}^k)+\mathbold{\nu},\y\hskip-0.05cm -\hskip-0.05cm \y^k + \y^k\hskip-0.05cm - \hskip-0.05cm\z_i^k\rangle \hskip-0.05cm+\hskip-0.05cm 3 M \beta_{k-1} \hskip-0.05cm+\hskip-0.05cm \zeta \\
&\leq q_i(\y^k) + \langle\h_i(\tilde{\x}^k)+\mathbold{\nu},\y \hskip-0.05cm- \hskip-0.05cm\y^k\rangle \hskip-0.05cm+\hskip-0.05cm \|\h_i(\tilde{\x}^k)\hskip-0.05cm+\hskip-0.05cm\mathbold{\nu}\|_2 \|\y^k \hskip-0.05cm-\hskip-0.05cm \z_i^k\|_2 \hskip-0.05cm+\hskip-0.05cm 3 M \beta_{k-1} \hskip-0.05cm+\hskip-0.05cm \zeta.
\end{align*}
We use the fact that $\|\mathbold{\nu}\|_2\leq \tau$ by construction in Lemma~\ref{l.tzeta}, $\|\h_i(\tilde{\x}^k)\|_2 \leq M$ by~\eqref{MM}, $\|\z_i^k - \bar{\z}^k\|_2 \leq 2\beta_{k-1}$ by~\eqref{dummy1}, and $\|\d^k\|_2\leq \beta_{k-1}$ by the preceding proof. By using these inequalities, we can bound
$$
\|\h_i(\tilde{\x}^k)+\mathbold{\nu}\|_2 \leq M + \tau, \quad \|\y^k - \z_i^k\|_2 = \|\z_i^k - \bar{\z}^k + \d^k\|_2 \leq 3 \beta_{k-1},
$$
and we obtain
\begin{equation*}
q_i(\y) \leq q_i(\y^k) + \langle\h_i(\tilde{\x})+\mathbold{\nu},\y - \y^k\rangle + (\beta_{k-1}(6M +3\tau) + \zeta), 
\end{equation*}
which is~\eqref{dummy5}. 

\noindent \emph{(c)} By using the definition of subdifferential~\eqref{egrad}, the inequality~\eqref{dummy5} implies $(\h_i(\tilde{\x})+\mathbold{\nu}) \in \partial_{\epsilon_k/n} q_{i,\y}(\y)$ with $\epsilon_k/n = (\beta_{k-1}(6M +3\tau) + \zeta)$. Summation over $i$ yields, 
\begin{equation*}
q(\y) \leq q(\y^k) + \Big\langle\sum_{i\in\myset{V}}\h_i(\tilde{\x})+\mathbold{\nu}, \y - \y^k\Big\rangle + n(\beta_{k-1}(6M +3\tau) + \zeta), 
\end{equation*}
for any $\mathbold{\nu}$, such that $\|\mathbold{\nu}\|\leq \tau$. Since $\|\d^k/\alpha\|_2 \leq \tau$ by construction, then we can choose $\mathbold{\nu} = \d^k/\alpha$, from which the claim follows. 

\noindent \emph{(d)} It is sufficient to write explicitly the update rule for $\y^k$. Starting from the definition of $\y^{k+1}$ in~\eqref{ydef} and the definition of $\v_i^k$ in~Lemma~\ref{agr}, we obtain
\begin{eqnarray*}
\y^{k+1} &=& \mathsf{P}_{Z} \Big[\frac{1}{n}\sum_{i\in\myset{V}} \v_i^k \Big] = \mathsf{P}_{Z} \Big[\frac{1}{n}\sum_{i\in\myset{V}}\sum_{j\in\myset{V}}[\W^\varphi]_{ij}(\z_j^k + \alpha \h_j(\tilde{\x}^k)) \Big] \\ &=& \mathsf{P}_{Z} \Big[\frac{1}{n}\sum_{i\in\myset{V}} (\z_i^k + \alpha \h_i(\tilde{\x}^k)) \Big] =  \mathsf{P}_{Z} \Big[\y^k + \d^k + \frac{\alpha}{n}\sum_{i\in\myset{V}} \h_i(\tilde{\x}^k)) \Big] \\ &=& \mathsf{P}_{Z} \Big[\y^k + \frac{\alpha}{n}\left(\sum_{i\in\myset{V}} \h_i(\tilde{\x}^k) + \frac{\d^k}{\alpha}\right) \Big].
\end{eqnarray*}
Given part \emph{(c)} of this Lemma, the claim follows. 
\qed
\end{proof}


\begin{proof} (of Theorem~\ref{th.2}) By Lemma~\ref{subgrad}, the sequence $\{\y^k\}$ is generated via an $\epsilon_k$ subgradient algorithm to maximize $q(\y)$. And in particular, $k\geq 1$
$$
\y^{k+1} = \mathsf{P}_Z[\y^k + \alpha/n \g(\tilde{\x}^k)], \quad \|\g(\tilde{\x}^k)\|_2 \leq n (M+\tau).
$$
Therefore, we can use any standard result on the convergence of approximate subgradient algorithms. E.g., by using~\cite[Proposition~4.1]{Nedich2002} (with $m=1$), the following holds for the sequence $\{\y^k\}$,

\noindent If $q^* = \infty$, then 
$$
\limsup_{k\to \infty} q(\y^k) = \infty,
$$

\noindent If $q^* < \infty$, then 
$$
\limsup_{k\to \infty} q(\y^k) \geq q^* - \alpha n (M + \tau)^2/2 - n(\beta_\infty(6 M + 3\tau) + \zeta),
$$
where $\beta_\infty = \lim_{k\to\infty} \beta_{k-1}$. Then, from the inequality~\eqref{dummy2} the claim is proven. 
\qed 
\end{proof}

\section{Primal Recovery: Proof of Theorem~\ref{th.3}}\label{sec:7}


\subsection{Some Basic Facts}


\begin{lemma}\label{itiy}
Let $\y^k$ be defined as~\eqref{ydef}. Under the same assumptions and notation of Theorem~\ref{th.2}, 
\begin{enumerate}
\item[(a)] For any $\y\in Z$,
\begin{equation*}
\sum_{t=1}^k \langle\g(\tilde{\x}^t),\y-\y^{t}\rangle \leq \frac{\|\y^{1} -  \y\|^2_2}{2 \alpha/n} + k\,\frac{\alpha n(M+\tau)^2}{2};
\end{equation*}
\end{enumerate}
\begin{enumerate}
\item[(b)] For any $\y\in Z$,
\begin{equation*}
\sum_{t=1}^k \langle\g(\tilde{\x}^t),\y-\y^{*}\rangle \leq \frac{\|\y^{1} -  \y\|^2_2}{2 \alpha/n} + k\,\frac{\alpha n(M+\tau)^2}{2} + \sum_{t=1}^k \epsilon_t,
\end{equation*}
where ${\epsilon_{t}} = n (\beta_{t-1} (6M + 3\tau)+\zeta)$.
\end{enumerate}
\end{lemma}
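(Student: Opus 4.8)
The plan is to establish both inequalities as standard consequences of the $\epsilon$-subgradient update rule~\eqref{iteps}, exploiting the non-expansiveness of the projection $\mathsf{P}_Z$ and the uniform bound $\|\g(\tilde{\x}^k)\|_2 \leq n(M+\tau)$ proven in Lemma~\ref{subgrad}. The backbone of the argument is a one-step ``descent-type'' inequality for the squared distance to an arbitrary point $\y \in Z$, telescoped over $t = 1, \dots, k$.

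\medskip
\noindent\textbf{Step 1 (One-step expansion).} First I would fix $\y \in Z$ and, using $\y^{t+1} = \mathsf{P}_Z[\y^t + (\alpha/n)\g(\tilde{\x}^t)]$ together with the fact that $\y \in Z$ and the projection is non-expansive, bound
\begin{align*}
\|\y^{t+1} - \y\|_2^2 &\leq \big\|\y^t + \tfrac{\alpha}{n}\g(\tilde{\x}^t) - \y\big\|_2^2 \\
&= \|\y^t - \y\|_2^2 + \tfrac{2\alpha}{n}\langle\g(\tilde{\x}^t), \y^t - \y\rangle + \tfrac{\alpha^2}{n^2}\|\g(\tilde{\x}^t)\|_2^2.
\end{align*}
Rearranging and using $\|\g(\tilde{\x}^t)\|_2 \leq n(M+\tau)$ gives a per-step bound on $\langle\g(\tilde{\x}^t), \y - \y^t\rangle$ in terms of the drop $\|\y^t - \y\|_2^2 - \|\y^{t+1} - \y\|_2^2$ plus the constant $\alpha n(M+\tau)^2 / 2$.

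\medskip
\noindent\textbf{Step 2 (Telescoping for part (a)).} Summing the per-step bound over $t = 1, \dots, k$, the distance terms telescope to $\|\y^1 - \y\|_2^2 - \|\y^{k+1} - \y\|_2^2 \leq \|\y^1 - \y\|_2^2$, and the constant term accumulates to $k\,\alpha n(M+\tau)^2/2$, yielding part~\emph{(a)} directly. Here the inner product involves $\y - \y^t$, matching the statement exactly.

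\medskip
\noindent\textbf{Step 3 (Shifting the base point for part (b)).} For part~\emph{(b)} the inner product is taken against the \emph{fixed} reference $\y^*$ rather than the running iterate $\y^t$, so I would write
$$
\langle\g(\tilde{\x}^t), \y - \y^*\rangle = \langle\g(\tilde{\x}^t), \y - \y^t\rangle + \langle\g(\tilde{\x}^t), \y^t - \y^*\rangle.
$$
The first piece is handled by part~\emph{(a)}. For the second, I would invoke the $\epsilon_t$-subgradient property from Lemma~\ref{subgrad}.\emph{(c)}, namely $\g(\tilde{\x}^t) \in \partial_{\epsilon_t} q_{\y}(\y^t)$, which by the definition~\eqref{egrad} of an $\epsilon$-subgradient of the concave function $q$ gives $\langle\g(\tilde{\x}^t), \y^t - \y^*\rangle \leq q(\y^t) - q(\y^*) + \epsilon_t \leq \epsilon_t$, the last step following from optimality of $\y^*$ (so $q(\y^t) \leq q(\y^*)$). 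Summing over $t$ then contributes $\sum_{t=1}^k \epsilon_t$, and combining with part~\emph{(a)} completes part~\emph{(b)}.

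\medskip
\noindent\textbf{The main obstacle} I anticipate is bookkeeping the base-point shift in part~\emph{(b)} cleanly: one must be careful that the inner product in the statement is against $\y - \y^*$ while the natural telescoping quantity from Step~1 is against $\y - \y^t$, and that the cross term is controlled precisely by the $\epsilon_t$-subgradient inequality rather than by an ordinary subgradient bound. The direction of the inequality (concave maximization versus convex minimization) and the sign conventions in~\eqref{egrad} must be tracked with care, but no genuinely hard estimate is needed beyond the bounds already secured in Lemma~\ref{subgrad}.
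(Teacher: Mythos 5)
Your proposal is correct and follows essentially the same route as the paper's proof: the same non-expansive projection expansion with the bound $\|\g(\tilde{\x}^t)\|_2 \leq n(M+\tau)$, the same telescoping for part \emph{(a)}, and for part \emph{(b)} the same decomposition of $\langle\g(\tilde{\x}^t),\y-\y^*\rangle$ with the cross term controlled by the $\epsilon_t$-subgradient inequality and optimality of $\y^*$. No gaps.
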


\begin{proof}
We start from the update rule~\eqref{iteps}. For any $\y \in Z$,
\begin{eqnarray*}
\|\y^{k+1} - \y\|^2_2 &=& \Big\|\mathsf{P}_{Z}\Big[\y^{k} + \frac{\alpha}{n} \g(\tilde{\x}^k)\Big] - \mathsf{P}_{Z}[ \y] \Big\|^2_2 \leq  \Big\|\y^{k} + \frac{\alpha}{n} \g(\tilde{\x}^k) -  \y\Big\|^2_2\\
&\leq & \|\y^{k} -  \y\|^2_2 + \frac{2 \alpha}{n} \langle\g(\tilde{\x}^k),\y^{k} -  \y\rangle + {\alpha^2} (M+\tau)^2.
\end{eqnarray*}
where we use the fact that $\|\g(\tilde{\x}^k)\|_2 = \|\sum_{i\in\myset{V}}(\h_i(\tilde{\x}^k)+\d^k/\alpha) \|_2 \leq n (M+\tau)$. Therefore, for any $\y \in Z$ 
\begin{equation}\label{dummy4}
\langle\g(\tilde{\x}^k),\y-\y^{k}\rangle \leq \frac{\|\y^{k} -  \y\|^2_2- \|\y^{k+1} -  \y\|^2_2}{2 \alpha/n} + \frac{\alpha\, n(M+\tau)^2}{2},
\end{equation}
and by summing over $k$, part \emph{(a)} follows. Since $\g(\tilde{\x}^k)$ is an $\epsilon_k$-subgradient of the dual function $q$ at $\y^k$, using the subgradient inequality~\eqref{egrad}, 
$$
\langle\g(\tilde{\x}^k),\y^{k} -  \y^*\rangle \leq q(\y^k) - q(\y^*)+\epsilon_k \leq \epsilon_k,  
$$
where the last inequality comes from the optimality condition  $q(\y^k) \leq q(\y^*)$, which is valid for any $\y^k\in\myset{Z}$. In particular, $\epsilon_k$ is defined in Lemma~\ref{subgrad}.\emph{(c)}. We then have, 
$$
\langle\g(\tilde{\x}^k),\y -  \y^*\rangle = \langle\g(\tilde{\x}^k),\y -  \y^{k}\rangle + \langle\g(\tilde{\x}^k),\y^{k} -  \y^*\rangle \leq  \langle\g(\tilde{\x}^k),\y -  \y^{k}\rangle +\epsilon_k. 
$$
From the preceding relation and~\eqref{dummy4}, we obtain
\begin{equation*}
\langle\g(\tilde{\x}^k),\y-\y^{*}\rangle \leq \frac{\|\y^{k} -  \y\|^2_2- \|\y^{k+1} -  \y\|^2_2}{2 \alpha/n} + \frac{\alpha n(M+\tau)^2}{2} + \epsilon_k, \quad k\geq 1
\end{equation*}
and summing over $k$ part \emph{(b)} follows as well. In particular, we remark that $\y^1 = \mathsf{P}_Z[\bar{\v}^0]$, which is bounded, since $Z$ is a compact set. 
\qed
\end{proof}






\subsection{Proof of Theorem~\ref{th.3}.\emph{(a)}}
\begin{proof}
By convexity of the primal cost $f(\x)$ and the definition of $\tilde{x}^k_i$ as a minimizer of the local Lagrangian functions over $x_i\in\myset{X}_i$, we have, 
\begin{equation}
f({\x}^k) \leq \frac{1}{k}\sum_{t=1}^k f(\tilde{\x}^t) = \frac{1}{k}\sum_{t=1}^k \sum_{i\in\myset{V}}\Big( q_i(\z_i^t) - \langle\z_i^t, \h_i(\tilde{\x}^t)\rangle\Big), \quad k\geq 1.
\end{equation} 
By Lemma~\ref{subgrad} inequality~\eqref{dummy5} with $\y = \z_i^t \in Z$,  
%
\begin{equation*}
q_i(\z_i^t) - q_i(\y^t) \leq \langle\h_i(\tilde{\x}^t), \z_i^t \rangle +  \langle\mathbold{\nu}, \z_i^t\rangle - \langle\h_i(\tilde{\x}^t) + \mathbold{\nu}, \y^t\rangle + \epsilon_t/n,
\end{equation*}
with $\epsilon_t/n = \beta_{t-1}(6 M + 3\tau) + \zeta$. Summing over $i\in\myset{V}$,
\begin{equation*}
\sum_{i\in\myset{V}} q_i(\z_i^t) \leq q(\y^t) + \sum_{i\in\myset{V}} \langle\h_i(\tilde{\x}^t), \z_i^t \rangle +  \sum_{i\in\myset{V}} \langle\mathbold{\nu}, \z_i^t\rangle - \langle\g(\tilde{\x}^t), \y^t\rangle + \epsilon_t, 
\end{equation*}
hence, 
\begin{equation}\label{dummy20}
f({\x}^k) \leq \frac{1}{k}\sum_{t=1}^k \Big( q(\y^t) + \sum_{i\in\myset{V}} \langle\mathbold{\nu}, \z_i^t \rangle- \langle\g(\tilde{\x}^t), \y^t \rangle+ \epsilon_t\Big).
\end{equation}
We can use Lemma~\ref{itiy}.\emph{(a)} with $\y = \mathbf{0} \in Z$ to upper bound $-\langle\g(\tilde{\x}^t), \y^t\rangle$, while we bound $\|\langle\mathbold{\nu},\z_i^t\rangle\|_2$ as  $\|\langle\mathbold{\nu}, \z_i^t\rangle\|_2 \leq \tau (\Lambda + \Gamma)$. The latter bound comes from the fact that by construction $\|\mathbold{\nu}\|_2\leq\tau$, and $\|\z_i^t\|_2\leq \Lambda + \Gamma$ by~\eqref{MM}. With this in place, we can write~\eqref{dummy20} as
\begin{equation*}
f({\x}^k) \leq \frac{1}{k}\sum_{t=1}^k q(\y^t) + n \tau (\Lambda + \Gamma) + \frac{\|\y^{1}\|^2_2}{2 k \alpha/n} + \frac{\alpha n(M+\tau)^2}{2} + \frac{1}{k}\sum_{t=1}^k\epsilon_t.
\end{equation*}
If we now compute
\begin{equation}\label{dummy30}
\frac{1}{k}\sum_{t=1}^k\epsilon_t = \frac{1}{k}\sum_{t=1}^k n (\beta_{t-1}(6M + 3\tau) + \zeta) \leq n (\beta_{0}(6M + 3\tau) + \zeta),
\end{equation}
and remember that by optimality $q(\y^t) \leq q^*$, $q^* = f^*$ by strong duality (Assumption~\ref{as.slater}), and $\|\y^1\|_2^2 \leq \Lambda^2+\Gamma^2$, then the claim follows. 
\qed
\end{proof}

\subsection{Proof of Theorem~\ref{th.3}.\emph{(b)}}

\begin{proof}
Given any dual optimal solution $\y^*$, we have
\begin{equation}\label{dummy6}
f(\x^k) = \underbrace{f(\x^k) + \Big\langle\y^*,\frac{1}{k}\sum_{t=1}^k \g(\tilde{\x}^t) \Big\rangle}_{(\mathrm{a})} - \Big\langle\y^*, \frac{1}{k}\sum_{t=1}^k \g(\tilde{\x}^t) \Big\rangle.
\end{equation}
\vskip-0.50cm \noindent We also know that, 
\begin{align}
(\mathrm{a}) &=  
f(\x^k) + \Big\langle\y^*, \frac{1}{k}\sum_{t=1}^k\sum_{i\in\myset{V}}\h_i(\tilde{\x}^t)\Big\rangle + n \Big\langle \y^* \frac{1}{k}\sum_{t=1}^k \d^t/\alpha \Big\rangle \nonumber \\ & \geq f(\x^k) + \Big\langle\y^*, \sum_{i\in\myset{V}}\h_i({\x}^k)\Big\rangle - n (\Lambda + \Gamma) \tau,\label{dummy76}
\end{align}
where we used the fact that $\h_i(\tilde{\x}^t)$ is a convex function of $\tilde{\x}^t$ and therefore,
$$
\frac{1}{k}\sum_{t=1}^k\sum_{i\in\myset{V}}\h_i(\tilde{\x}^t) \geq \sum_{i\in\myset{V}}\h_i({\x}^k),
$$
and the Cauchy-Schwarz inequality to bound 
$$
\Big\langle\y^*, \frac{1}{k}\sum_{t=1}^k \d^t/\alpha \Big\rangle \geq - \|\y^*\|_2 \Big\|\frac{1}{k}\sum_{t=1}^k \d^t/\alpha \Big\|_2 \geq - \tau (\Lambda+\Gamma) .
$$
Furthermore, by the saddle point property of the Lagrangian function, i.e., for any $\x \in X, \y \in Z$
$$
L(\x^*, \y) \leq L(\x^*, \y^*) \leq L(\x, \y^*), 
$$
and the fact that under strong duality (Assumption~\ref{as.slater}) $L(\x^*, \y^*) = q^* = f^*$, we can write
\begin{equation}\label{dummy7}
f(\x^k) + \Big\langle\y^*,\sum_{i\in\myset{V}}\h_i({\x}^k)\Big\rangle - n \tau(\Lambda + \Gamma) = L(\x^k, \y^*) - n \tau(\Lambda + \Gamma) \geq f^* - n \tau(\Lambda + \Gamma).
\end{equation}
We can now upper bound $\Big\langle\y^*,\frac{1}{k}\sum_{t=1}^k \g(\tilde{\x}^t) \Big\rangle$ in~\eqref{dummy6} as in Lemma~\ref{itiy}.\emph{(b)}, with $\y = 2 \y^* \in Z $ (by the definition of $r$). By substituting this bound in~\eqref{dummy6} and by combining it with~\eqref{dummy76} and~\eqref{dummy7}, we get
\begin{equation*}
f(\x^k) \geq f^* - n \tau (\Lambda + \Gamma) - \frac{\|\y^{1}-2\y^*\|^2_2}{2 k \alpha/n} - \frac{\alpha n(M+\tau)^2}{2} - \frac{1}{k}\sum_{t=1}^k\epsilon_t.
\end{equation*}
From the upper bound~\eqref{dummy30}, and $\|\y^1 - 2\y^*\|^2_2 = \|\y^1\|^2_2 + 4\|\y^1\|_2\|\y^*\|_2 + 4\|\y^*\|^2_2$, which can be upper bounded as $9(\Lambda^2+\Gamma^2)$, the claim follows. \qed
\end{proof}


\section{Numerical results}\label{sec:num}

In this section, we present some numerical results to assess the proposed algorithm for different $\varphi$ values in comparison with the standard dual decomposition. We choose the following simple yet representative sample problem,
\begin{equation*}
\minimize_{\small\begin{array}{c}x_i \in [0,1]\\ i\in\{1,\dots,100\} \end{array}} \hskip-0.3cm f(\x):= -\sum_{i=1}^{33} \sigma_i x_i -\sum_{i=34}^{100} \sigma_i \log(1+x_i),~\subjectto\,\sum_{i=1}^{100} \sigma_i x_i \leq 10 , 
\end{equation*}
where each $\sigma_i\in[0,1]$ is drawn from a uniform random distribution. This type of problem has been considered e.g. in network utility maximization contexts~\cite{Palomar2006}. We solve the problem in Matlab with Yalmip and SDPT3 \cite{Loefberg2004, Toh1999}, where we also implement the proposed algorithm\footnote{The code is available at: {http://ens.ewi.tudelft.nl/$\sim$asimonetto/NumericalExample.zip}.}. 

For this problem a Slater vector is $x_i = 0$ for all $i$; furthermore $\gamma = 10$, while $q(0)$ is solvable by inspection ($x_i = 1$) and gives (for our realization of $\sigma_i$) $r = 8.62$. The communication network is a randomly selected and the average number of neighbors is $3.12$.

Figure~\ref{fig.1} depicts convergence and it is in line with our theoretical findings: the error decreases as $O(1/k)$ till it reaches a bounded error floor. This bounded error floor depends on both $\varphi$ and $\alpha$ as captured in~Theorem~\ref{th.3}. We have also plotted the performance of the standard dual decomposition, which (in the absence of a master node), requires reaching complete consensus at each iteration (in theory $\varphi\to\infty$, but we have set $\varphi = 26$, which yields a full $\W^\varphi$). 

Figure~\ref{fig.2} shows the relative error with respect to the total number of messages the nodes are exchanging. We can see that, in the absence of a master node, the proposed consensus-based algorithm involves significantly fewer number of messages than the standard dual decomposition for the same accuracy level (till up to 1\% error). This is very important in real life applications.   

\begin{figure}
\centering
\psfrag{b}[c]{$|f^*-f(\x^k)|$}
\psfrag{c}[c]{Number of iterations $k$}
\psfrag{phiphiphiphiphie}{$\varphi = 1$, $\alpha = 1$}
\psfrag{phiphiphiphiphio}{$\varphi = 2$, $\alpha = 1$}
\psfrag{phiphiphiphiphis}{$\varphi = 4$, $\alpha = 1$}
\psfrag{phiphiphiphiphit}{$\varphi = 26$, $\alpha = 1$}
\psfrag{phiphiphiphiphik}{$\varphi = 1$, $\alpha = 0.1$}
\psfrag{rayline}{\hskip0.2cm $1/k$ line}
\includegraphics[width=0.7\textwidth]{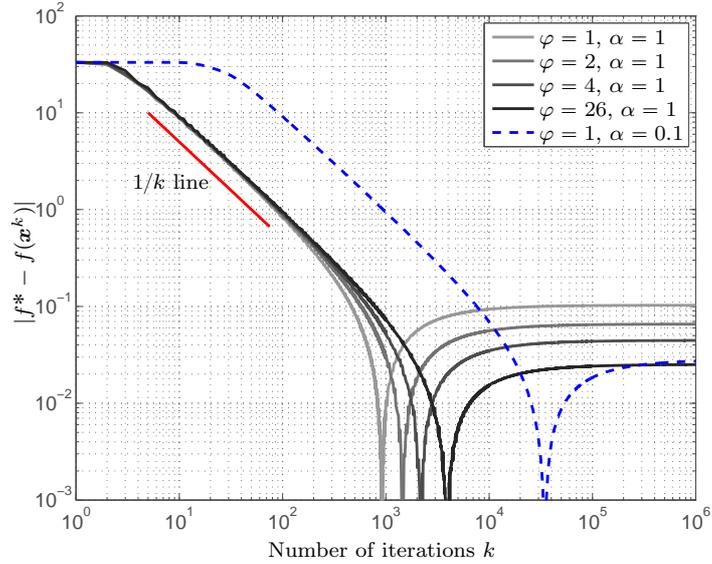}
\caption{Convergence of the proposed algorithm for different choices of stepsize $\alpha$ and number of consensus step $\varphi$.}
\label{fig.1}
\end{figure}

\begin{figure}
\centering
\psfrag{b}[c]{$(f^*-f(\x^k))/f^*$}
\psfrag{a}[c]{Total number of messages}
\psfrag{phiphiphiphiphie}{$\varphi = 1$, $\alpha = 1$}
\psfrag{phiphiphiphiphio}{$\varphi = 2$, $\alpha = 1$}
\psfrag{phiphiphiphiphis}{$\varphi = 4$, $\alpha = 1$}
\psfrag{phiphiphiphiphit}{$\varphi = 26$, $\alpha = 1$}
\psfrag{phiphiphiphiphik}{$\varphi = 1$, $\alpha = 0.1$}
\includegraphics[width=0.7\textwidth]{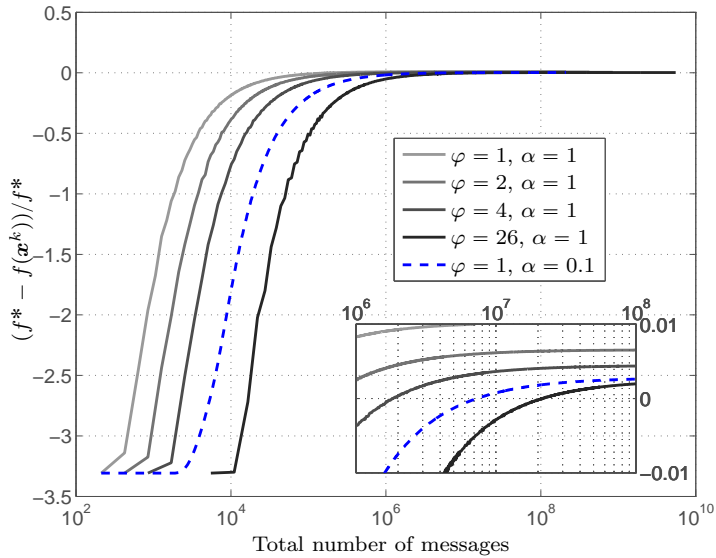}
\caption{Relative error and number of exchanged messages for different choices of stepsize $\alpha$ and number of consensus step $\varphi$.}
\label{fig.2}
\end{figure}

\section{Future research questions}\label{sec:rquestion}

Future research encompasses the following points. 

First of all, we have used the ergodic mean to recover the primal solution. The reason for it, is mainly technical: it helps to derive convergence rate results, via a telescopic cancellation argument. Other convex combinations have been advocated, e.g., in~\cite{Gustavsson2014}, but the results they can offer are typically asymptotical, and require vanishing stepsizes. An open question is whether other combinations for primal recovery are possible using constant stepsizes. 

Then, in the derivation, we have limited ourselves to objective convergence. It would be relevant to investigate convergence of the ergodic mean to the optimizer set, either in the general convex case or in the strong convex scenario. 

Finally, The bound on $\varphi$, i.e., $\bar{\varphi}$ has been derived in such a way that we could use $\epsilon$-subgradient arguments in the rest of the convergence proofs. However, it is quite conservative (in fact, in practice, $\varphi$ can be as small as $1$, but this is often not captured by the bound in Theorem~\ref{th.1}). This is due to Lemma~\ref{l.basic2} and the use of the spectral radius as an upper bound. A thorough investigation is left for future research.

\section{Conclusions}\label{sec:8}

A consensus-based dual decomposition scheme has been proposed to enable a network of collaborative computing nodes to generate approximate dual and primal solutions of a distributed convex optimization problem. We have proven convergence of the scheme both in the dual and the primal objective senses up to a bounded error floor. The proposed scheme is of theoretical and applied importance since it eliminates the need for a centralized entity (i.e., a master node) to collect the local subgradient information, by distributing this task among the nodes. This need has been a major hurdle in the use of dual decomposition for solving certain classes of distributed optimization problems. 

\footnotesize
\bibliographystyle{spmpsci}
\bibliography{../../PaperCollection2}

\end{document}